\documentclass[10pt]{amsart}
\usepackage{amsthm, amsmath, hyperref, cleveref, amsfonts, amssymb, enumerate, textcmds, tensor, enumitem, mathdots}
\usepackage{pst-node}
\usepackage{tikz-cd} 
\usepackage{graphicx,tikz}
\usepackage{enumerate}
\usepackage{pinlabel}
\usepackage[colorinlistoftodos]{todonotes}
\usepackage{commath}
\usepackage[Bourbaki-arrow]{dynkin-diagrams}
\usepackage{placeins}
\usepackage{mathtools}

\let\svthefootnote\thefootnote
\newcommand\freefootnote[1]{%
  \let\thefootnote\relax%
  \footnotetext{#1}%
  \let\thefootnote\svthefootnote%
}

\newcommand{\R}{\mathbb R}

\newtheorem{thm}{Theorem}[section]
\newtheorem{lem}[thm]{Lemma}

\newtheorem{cor}[thm]{Corollary}

\DeclareMathOperator{\C}{\mathbb{C}}

\allowdisplaybreaks

\theoremstyle{definition}

\newtheorem{lemma}[thm]{Lemma}

\newtheorem{remark}[thm]{Remark}

\begin{document}
\title{Holomorphicity of stable minimal surfaces of low genus}
\author{Nathaniel Sagman}
\address{Nathaniel Sagman, University of North Carolina at Chapel Hill, NC, United States}
\email{nsagman@unc.edu}
\author{Thomas-Ren\'e Thalmaier}
\address{Thomas-Ren\'e Thalmaier, Institut Polytechnique de Paris, Palaiseau, France}
\email{thr.thalm@gmail.com}

\begin{abstract}
We prove that a (branched) minimal immersion from $\C$ to $\R^n$ is stable if and only if it lives in an even dimensional affine subspace and is holomorphic for some orthogonal complex structure on the subspace. More generally, we prove that the same result holds for a class of genus $0$ surfaces that can have infinite total curvature. This contributes to an inquiry initiated by Micallef, who previously proved the equivalence in genus $0$ assuming completeness and finite total curvature. As a corollary, we prove a holomorphicity result for covering stable minimal surfaces of genus $0$ and $1$, recovering a theorem of Fraser and Schoen as a particular case.

Our approach is new, based on a method of constructing variations developed by the first named author and Marković. For unstable surfaces, we get explicit destabilizations and destabilization radii that can be read from the Weierstrass-Enneper data.
\end{abstract}
\maketitle

\section{Introduction}
A minimal surface in $\R^n$ is said to be area minimizing if it minimizes area when restricted to compact subsets. It is well understood that since holomorphic curves in $\C^{n}$ are calibrated, they are area minimizing as surfaces in $\R^{2n}$. In light of this, Micallef wrote in \cite{Micallef}, ``it is reasonable to ask whether an area minimizing surface in $\R^n$ lies in an even dimensional affine subspace of $\R^n$ and is holomorphic with respect to some orthogonal complex structure on this even dimensional affine subspace."  Micallef's question reflects the perspective that, inside a Riemannian manifold, minimal surfaces or other geometrically meaningful immersed submanifolds should respect and reveal the geometry of the ambient space.

The holomorphicity condition is equivalent to saying the minimal surface is parametrized by a map of the form $T\circ F$, where $F$ is a holomorphic map to $\C^{m}\subset \R^{n}$, $m= \lfloor \frac{n}{2} \rfloor$, and $T$ is a rigid motion of $\R^{n}$ (see also Theorem \ref{thm:main2}). For brevity, we'll say that such a minimal surface is \textit{holomorphic up to a rigid motion}, or just \textit{holomorphic} when the context is clear. For $n=3$, the condition always holds due to Fischer-Colbrie and Schoen \cite{FCS} and do Carmo and Peng \cite{dCP}, who showed that any stable complete minimal surface in $\R^3$ is an affine plane. In \cite{Micallef}, among other things, Micallef proved the following two positive results. Recall that ``stable" means area minimizing on compact sets up to second order.
\begin{itemize}
    \item When $n=4$, any stable minimal immersion of a
complete oriented parabolic surface into $\R^4$ is holomorphic \cite[Theorem I]{Micallef} (extended to branched immersions in \cite{Micallefnote}).
\item For all $n$, any stable minimal immersion of a complete oriented  surface of finite total curvature and genus $0$ into $\R^n$ is holomorphic \cite[Theorem IV]{Micallef}. By Chern and Osserman \cite[Theorem 1]{CO}, any such surface is a finitely puncture sphere.
\end{itemize}

Since \cite{Micallef}, there have been a number of developments. Notably, in \cite{Ar}, Arezzo, Micallef, and Pirola produced once-punctured genus $2$ minimal surfaces in $\R^{22}$ that are not holomorphic (see also \cite{AM_tori} on minimal surfaces in flat tori). More recently, Fraser and Schoen proved that any complete oriented genus 1 minimal surface of finite total curvature in $\R^n$, if covering stable (i.e., stable under finite covers), is holomorphic \cite[Theorem 1.1]{FS}, and Cheng, Karigiannis, and Madnick proved that a complete oriented parabolic stable minimal surface in $\R^{2+2k}$ is holomorphic if and only if the induced connection on the normal bundle has holonomy in $U(k)$ \cite{CKM}. 

In this paper, we make another contribution to Micallef's inquiry. With a method of proof that comes from a new perspective, we prove that the answer to his question is yes for certain genus $0$ surfaces that can have infinite total curvature (Theorems \ref{thm: main_C_case} and \ref{thm: main}), recovering \cite[Theorem IV]{Micallef} as a special case. We use these results to prove that, for certain genus $0$ and genus $1$ surfaces, the answer is yes when we replace ``stable" with ``covering stable" (Corollary \ref{thm: FS}); as a particular case, we get a new proof of \cite[Theorem 1.1]{FS}. Our proofs lead to estimates on destabilization radii of non-holomorphic maps, which we make explicit in the case of maps from $\C$ to $\R^3$ (Theorem \ref{thm: radius}).

Our first main theorem concerns minimal surfaces with the simplest topology, with no assumption on the geometry.
\begin{thm}\label{thm: main_C_case}
    A branched minimal immersion $F:\C\to \R^n$ is stable if and only if it is holomorphic up to a rigid motion.
\end{thm}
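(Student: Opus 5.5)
The direction ``holomorphic $\Rightarrow$ stable'' is classical. If $F=T\circ G$ with $G$ holomorphic into $\C^m\subset\R^n$ for some orthogonal complex structure, then the image is calibrated by the K\"ahler form of that structure. It is therefore area minimizing on compact sets, in particular stable, and branch points cause no trouble. All the work is in the converse, which I will prove in contrapositive form.

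\textbf{Setup and reduction.} Write $\partial F=\varphi\,dz$ with $\varphi=(\varphi_1,\dots,\varphi_n)$ entire and $\langle\varphi,\varphi\rangle=\sum\varphi_i^2\equiv0$ (conformality), where $\langle\cdot,\cdot\rangle$ is the complex bilinear extension of the Euclidean inner product. Let $W\subset\C^n$ be the complex span of $\{\varphi(z):z\in\C\}$.

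The first step is a linear algebra lemma: $F$ is holomorphic up to a rigid motion if and only if $W$ is totally isotropic, i.e.\ $\langle\varphi(z),\varphi(w)\rangle\equiv0$. For the ``if'' direction, note that isotropy forces $W\cap\overline W=0$. One then defines $J$ to be $i$ on $W$, $-i$ on $\overline W$, and extends it to an orthogonal complex structure on an even-dimensional space containing $\mathrm{Re}(W\oplus\overline W)$; then $F$ is $J$-holomorphic.

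So it suffices to assume that the entire function $h_w(z)=\langle\varphi(z),\varphi(w)\rangle$ is not identically zero for some $w$, set $c=\varphi(w)$, and produce a compactly supported destabilizing variation.

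\textbf{Second variation via energy and Hopf differential.} Following the Sagman--Markovi\'c method, I compare area with energy. For a compactly supported variation $V$, the second variation of energy is the Dirichlet form $E''(V)=\int|\nabla V|^2$. The second variation of area is
\[
A''(V)=E''(V)-\sup_{\mu}\Big(\text{quadratic gain from reparametrizing by a Beltrami differential }\mu\Big).
\]
The gain is governed by the first variation of the Hopf differential, $\delta\Phi=2\langle\varphi,\partial V\rangle\,dz^2$, paired against $\mu$, minus a term quadratic in $\mu$ whose size is controlled by $\int|\mu|^2$ in the metric $|\varphi|^2|dz|^2$. I would first rederive this formula carefully, including the correct treatment at branch points, where $\varphi$ vanishes and the metric degenerates.

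I take $V=\mathrm{Re}(\chi_R\,\bar c)$ or $\mathrm{Re}(\chi_R\,c)$, possibly multiplied by a polynomial, with $\chi_R$ the logarithmic cutoff: equal to $1$ on $|z|<R$, vanishing for $|z|>R^2$, and $\log$-linear in between. Then:
\begin{itemize}
\item $E''(V)\lesssim |c|^2/\log R\to0$.
\item $\delta\Phi=2\,h_w(z)\,\partial\chi_R\,dz^2$, which is nonzero because $h_w\not\equiv0$.
\end{itemize}
Choosing $\mu$ adapted to $\overline{h_w\,\partial\chi_R}$ divided by the conformal factor, the gain is essentially
\[
\int_{R<|z|<R^2}\frac{|h_w|^2\,|\partial\chi_R|^2}{|\varphi|^2},
\]
and the goal is to show this exceeds $E''(V)$ for large $R$.

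\textbf{Main obstacle.} The hard step is comparing the gain with the cutoff energy on the annulus $R<|z|<R^2$, since $|h_w|^2/|\varphi|^2$ could a priori be small there. Here is what I would try, in order.
\begin{enumerate}
\item Exploit the freedom in choosing $c$ among the vectors $\varphi(w)$ and their conjugates, so as to maximize $|h_w|/|\varphi|$.
\item Replace the constant $c$ by $p(z)\,c$, using both $V$ and $JV$-type combinations, so that the Dirichlet energy and the gain scale identically. Both then carry the factor $\int|\partial\chi_R|^2$, and the comparison reduces to a pointwise or averaged inequality of the form $|\langle\varphi,c\rangle|^2\ge\epsilon|\varphi|^2|c|^2$ on a set of definite logarithmic density.
\item Obtain that inequality from the fact that $h_w/|\varphi|$ is bounded below on average along circles, because $\log|h_w|$ is subharmonic and $h_w$ is entire.
\end{enumerate}
If this averaged comparison goes through, then $A''(V)<0$ for $R$ large, contradicting stability. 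It also yields an explicit destabilization radius in terms of the Weierstrass data.
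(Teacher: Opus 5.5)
Your framing is sound. Calibration gives holomorphic $\Rightarrow$ stable, also for branched maps, since variations are compactly supported and the K\"ahler form is exact. Your isotropy criterion is the paper's Theorem~\ref{thm:main2}/Lemma~\ref{lem: isotropy}, phrased with the values $\varphi(z)$ instead of the Taylor coefficients.

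The gap is the destabilizing variation, which is the whole content of the theorem, and the one you write down provably cannot work. Since $\chi_R$ is real, $\Re(\chi_R\bar c)=\Re(\chi_R c)=\chi_R e$ with $e=\Re c$ a constant real vector. Hence $\langle\varphi,\partial V\rangle=\langle\varphi,e\rangle\,\partial\chi_R$, not $h_w\,\partial\chi_R$, and a real polynomial factor changes nothing. This $V$ is a cutoff of a translation, whose normal part is a Jacobi field, and for any such cutoff $A''(\chi e)=\int|\nabla\chi|^2|e^\perp|^2\ge0$. Your energy/Hopf-differential comparison, carried out, says the same thing. With $\varphi=F_z$ and $V$ real, one has exactly $A''(V)=\int4|V_z|^2-8\int|\langle\varphi,V_z\rangle|^2/|\varphi|^2$. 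Because $\varphi$ is null, $2|\langle\varphi,e\rangle|^2/|\varphi|^2=|e^T|^2\le|e|^2$ pointwise for real $e$. So the gain never exceeds the cutoff energy, for every $R$ and every choice of $c$ among $\varphi(w),\overline{\varphi(w)}$. The comparison you flag as the main obstacle is not hard but false.

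The fallback does not close the gap, for three reasons.
\begin{enumerate}
\item With $V=\Re(\chi_R\,p\,c)$ for a nonconstant holomorphic $p$, energy and gain no longer both carry the factor $\int|\partial\chi_R|^2$, because the field has nonzero gradient on all of $|z|<R$. So the reduction to the annulus fails.
\item A lower bound $|\langle\varphi,c\rangle|^2\ge\epsilon|\varphi|^2|c|^2$ is the wrong kind of estimate: you need gain strictly above energy, not a positive fraction of it.
\item Subharmonicity of $\log|h_w|$ cannot control $|h_w|/|\varphi|$. Appending a holomorphic factor $(F,iF)$ with $F$ of fast growth inflates $|\varphi|$ without changing $h_w$.
\end{enumerate}

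What is missing is a family of explicitly computable, non-Jacobi variations in which the failure of isotropy shows up in the dominant term. The paper gets this from the self-maps variations of Theorem~\ref{thm: thmc}, using $z^{-k}+ye^{i\theta}z^{-m}$ on the rescaled disks $h^r$. Fourier orthogonality (Lemma~\ref{lem: main computation}) makes $\mathcal{F}_{h^r}$ a quadratic in $y$ whose coefficients are polynomials in $r$. Inducting on $N=k+m$, the non-isotropy enters through the cross term $\pi y\,\Re(E^N_k e^{i\theta})\,r^{N}$, which dominates the discriminant for large $r$. Lemma~\ref{lem: E_k is zero} then turns $E^N_k=0$ for all $k$ into $g(c_j,c_{N-j})=0$.
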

For context, by \cite{CO}, an isometric minimal immersion of $\C$ is complete with finite total curvature (in which case, the result is contained in \cite[Theorem IV]{Micallef}) if and only if every component of the map is a polynomial (see also Remark \ref{rem: drop completeness}). Although Theorem \ref{thm: main_C_case} deals with planes, it sheds light on the situation for minimal surfaces with arbitrary topology: it suggests that if a minimal surface contains a disk of large conformal radius, then we should suspect that stability is equivalent to holomorphicity.

Our second main theorem treats certain genus $0$ surfaces with infinitely many ends. This theorem in fact contains Theorem \ref{thm: main_C_case} (below, take $P=\emptyset$), but we separated them because Theorem \ref{thm: main_C_case} is cleaner and worth emphasizing. We say that a minimal surface is locally complete at an end if any curve going into the end has infinite length in the induced metric. We say that a branched minimal immersion of a punctured surface is locally complete with finite total curvature around the punctures if the corresponding ends have those properties. The set of ends at which a minimal surface is locally complete with finite total curvature is discrete (by a minor extension of \cite{CO}, see \S \ref{sec: reducing}), and hence any genus $0$ surface with infinite ends has an end of infinite total curvature. 
\begin{thm}\label{thm: main}
Let $\Sigma=\C\backslash P,$ where $P$ is a discrete subset, and let $F:\Sigma\to \R^n$ be a branched minimal immersion that is locally complete with finite total curvature around $P$. Then $F$ is stable if and only if it is holomorphic up to a rigid motion.
\end{thm}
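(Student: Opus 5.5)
The easy direction is calibration. If $F=T\circ G$ with $G$ holomorphic into $\C^m\subset\R^n$, then the Kähler form $\omega$ restricts to the area form on the image. For any compactly supported variation $F_t$ of $F$ on a compact $K\subset\Sigma$, Stokes' theorem gives
\[
\mathrm{Area}(F_t|_K)\ \ge\ \int_K F_t^*\omega\ =\ \int_K F^*\omega\ =\ \mathrm{Area}(F|_K),
\]
since $F_t$ and $F$ agree near $\partial K$. This holds with branch points as well, because the area form is still dominated by $\omega$ there. Hence $F$ is area minimizing on compact sets, and in particular stable.

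For the converse, write $\partial F=(f_1,\dots,f_n)\,dz$ on $\Sigma$ with $\sum f_i^2=0$. Holomorphicity up to a rigid motion is equivalent to the span $V\subset\C^n$ of the values of $\partial F/\partial z$ being totally isotropic, i.e.\ $f_i\bar{\ }$-pairings $\langle \phi(z),\phi(w)\rangle_{\C\text{-bilinear}}=0$ for all $z,w$. Indeed, an isotropic subspace $V$ determines $V\oplus\bar V$, which gives an even dimensional real subspace with an orthogonal complex structure ($i$ on $V$) for which $F$ is holomorphic. So assume $\phi=\partial_zF$ does not span an isotropic subspace. Then there exist $z_0,w_0$ with $\langle\phi(z_0),\phi(w_0)\rangle\neq 0$, and equivalently (after expanding in power series) some nonzero holomorphic invariant, for instance $\langle\phi(z),\phi'(z)\rangle$ or a higher Wronskian-type pairing.

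To destabilize, I would follow the Sagman--Marković method. For a conformal minimal map, the second variation of area is computed through the second variation of the Dirichlet energy, allowing reparametrizations. Concretely, I take $F+tV_t$ with $V=\mathrm{Re}(\eta\,\bar\psi)$, where $\psi$ is a constant (or holomorphic) vector in $\C^n$ built from $\phi(w_0)$ and $\eta$ is a compactly supported cutoff. Then I compute $\frac{d^2}{dt^2}E$ minus the correction from the optimal infinitesimal conformal reparametrization, which is obtained by solving a $\bar\partial$-problem $\bar\partial\mu=\langle\phi,\partial V\rangle$-type equation on a disk. The second variation becomes a quadratic form in which the term $\int|\bar\partial V|^2$ can be made small by taking $\eta$ a logarithmic cutoff on a large disk $|z|<R$, since the relevant capacity of $\C$ is zero. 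The remaining term, essentially $-|\int\langle\phi,\psi\rangle\,\bar\partial\eta\,dz|^2$-type from the infinitesimal Beltrami differential, stays bounded below by a positive constant precisely when $\langle\phi,\psi\rangle\not\equiv0$. This yields negativity for $R$ large, and it also gives explicit destabilization radii from the Weierstrass data.

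The punctures $P$ must be handled by the same cutoff trick. Near each $p\in P$, local completeness with finite total curvature means $F$ has a pole-type end, and the end is conformally a punctured disk. The log cutoff around $p$ again has arbitrarily small energy (zero capacity), so the variation is allowed to avoid $p$. The issue is that the Beltrami/holomorphic quadratic differential terms may have poles at $p$. I expect the main obstacle to be controlling these terms: showing that the contribution of $\langle\phi,\psi\rangle$ near the punctures does not swamp the gain. I would first choose $\psi$ and a compact region (a large disk minus small disks around the finitely many punctures inside it) so that the finite-total-curvature ends have only poles of controlled order. Then I would check, with the Chern--Osserman structure, that the boundary terms from the small circles vanish as the radii shrink.
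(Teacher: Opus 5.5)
\textbf{There is a genuine gap in the converse.} Your calibration argument for the ``if'' direction is fine. The ``only if'' direction, which is the real content of the theorem, does not go through.

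\textbf{The specified variation cannot destabilize.} The only variation you write down is $V=\Re(\eta\bar\psi)$ with $\psi\in\C^n$ constant and $\eta$ a real cutoff. This is $V=\eta w$ with $w=\Re\psi\in\R^n$ fixed, i.e.\ a cut-off translation. On a minimal surface the second variation of area depends only on the normal part of $V$, and $w^\perp$ is a Jacobi field (a translation). Hence $\frac{d^2}{dt^2}\big|_{t=0}A(F+t\eta w)=\int|d\eta|^2|w^\perp|^2\,dx\,dy\ge 0$, and a logarithmic cutoff drives this to $0$ from above. So this family never destabilizes anything, holomorphic or not.

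\textbf{The claimed negative term is zero.} Since $\langle\phi,\psi\rangle$ is holomorphic and $\eta$ is compactly supported, $\int\langle\phi,\psi\rangle\,\partial_{\bar z}\eta\,dx\,dy=0$ by Stokes. If instead $\psi$ is a nonconstant holomorphic vector, then $\partial_{\bar z}V$ contains $\eta\,\overline{\psi'}$. In that case $\int|\partial_{\bar z}V|^2$ is no longer small, and the capacity argument is lost.

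\textbf{The sample invariant is trivial.} $\langle\phi,\phi'\rangle=\tfrac12\partial_z\langle\phi,\phi\rangle$ vanishes identically for every conformal harmonic map. Non-isotropy lives in the pairings $g(c_j,c_{N-j})$ of higher jets, which are tied together by the conformality relations $\sum_j g(c_j,c_{N-j})=0$. You need variations that separate these pairings individually.

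\textbf{What the paper does instead.} Writing $F_z=\sum_j c_jz^j$, the paper proves $g(c_j,c_{N-j})=0$ for all $j$ by induction on $N$; this suffices by Theorem~\ref{thm:main2}.
\begin{enumerate}
\item For $0\le k\le N/2$ and $m=N-k$, it feeds $\varphi=z^{-k}+ye^{i\theta}z^{-m}$ into Theorem~\ref{thm: thmc} for the rescaled maps $h^r=h(r\,\cdot)$.
\item By Fourier orthogonality (Lemma~\ref{lem: main computation}), $\mathcal F_{h^r}(\varphi)=y^2P(r)+yQ(r)+T(r)$. The terms $g(c_j,c_{2k-j})$ vanish by the induction hypothesis, $\deg T\le 2k-2$, $\deg P\le 2m$, and the $r^N$-coefficient of $Q$ is $\pi\Re(E^N_ke^{i\theta})$. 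Here $E^N_k$ is a specific weighted sum of the $g(c_j,c_{N-j})$.
\item If $E^N_k\neq 0$, the discriminant grows like $r^{2N}$, so some $y$ destabilizes for large $r$. Lemma~\ref{lem: E_k is zero} then converts $E^N_k=0$ for all $k$ into $g(c_j,c_{N-j})=0$.
\end{enumerate}
Negativity comes from this growth in $r$, not from a capacity-zero cutoff.

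\textbf{The punctures.} You leave these as an open obstacle, but the paper needs no local pole analysis. By Chern--Osserman the Weierstrass data are meromorphic at $P$. Weierstrass factorization then gives an entire $f$ such that every $f\alpha_i$ is entire. The resulting $h_f:\C\to\R^n$ has the same generalized Gauss map as $F$. Therefore it has the same normal bundle, the same second variation on normal variations (the integrand is invariant under $g\mapsto|f|^2g$), and the same holomorphicity, by Lemma~\ref{lem: properties preserved}. Log cutoffs at $P$ and at the zeros of $f$ transfer stability, which reduces everything to the case $P=\emptyset$.
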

Note that there is no completeness or curvature assumption for the end at $\infty$. We can also prove the theorem for certain minimal immersions with infinite curvature at multiple ends, but we need to impose a technical condition. The result is Theorem \ref{thm: main_extended}, proved in \S \ref{sec: reducing}. It would be interesting to see if Theorem \ref{thm: main} can be pushed further; the results from \cite{MS} that we use (see below) don't seem to be sufficient to do so, not at least without further development. 

Theorem \ref{thm: main} extends \cite[Theorem IV]{Micallef}. We relax the completeness (by permitting branching, and not requiring completeness at one end) and curvature conditions, and allow for infinite ends. Regarding completeness, we point out that, using the second variation formula from \cite{Micallefnote}, the original proof of \cite[Theorem IV]{Micallef} pushes through for branched minimal immersions with finite total curvature at punctures.
By contrast, the finite total curvature hypothesis is essential to the proof in \cite{Micallef}: the starting point is that the generalized Gauss map of a finite total curvature minimal surface extends holomorphically over the ends of the surface. 

About removing curvature conditions, on one hand, an extension of \cite[Theorem IV]{Micallef} to infinite total curvature minimal surfaces might not be surprising, since such a surface is wild at an end and is unlikely to be stable to begin with. On the other hand, holomorphic maps can have infinite total curvature, and at an essential singularity there might be enough freedom to perturb to a non-holomorphic but stable minimal surface (as is done in \cite{Ar}). Theorem \ref{thm: main_C_case} shows that this is not the case.

As our final result on holomorphicity, we use Theorems \ref{thm: main_C_case} and \ref{thm: main} to study minimal surfaces that, after filling in punctures, are universally covered by $\C$. Recall that a minimal immersion $F:X\to \R^n$ is covering stable if for any finite covering $p: Y\to X$, the minimal immersion $F\circ p: Y\to \R^n$ is stable. If $F$ is holomorphic for the induced complex structure on $X$, then $F\circ p$ is holomorphic with respect to the pullback complex structure on $Y$, and hence holomorphic maps are covering stable.
\begin{cor}\label{thm: FS}
Let $\Sigma$ be either $\C^*$ or a compact oriented surface of genus $1$. For $P\subset \Sigma$ a discrete subset, let $F:\Sigma\backslash P\to \R^n$ be a branched minimal immersion that is locally complete with finite total curvature around $P$. Then $F$ is covering stable if and only if it is holomorphic up to a rigid motion. 
\end{cor}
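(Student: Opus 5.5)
The easy direction is immediate. A map that is holomorphic up to a rigid motion stays holomorphic after composition with any finite covering, so every finite cover of it is stable.

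For the converse, the plan is to pass to a covering by $\C$ and apply Theorem \ref{thm: main}. Suppose $F$ is covering stable. Let $\pi:\C\to\Sigma$ be the universal covering: either $z\mapsto e^{z}$ when $\Sigma=\C^*$, or $\C\to\C/\Lambda$ for a lattice $\Lambda$ when $\Sigma$ is a torus. Pulling back the complex structure induced by $F$, we set $\tilde P=\pi^{-1}(P)$, which is discrete in $\C$, and $\tilde F=F\circ\pi:\C\setminus\tilde P\to\R^n$. Since $\pi$ is a local isometry for the pulled-back metric, $\tilde F$ is a branched minimal immersion that is locally complete with finite total curvature around $\tilde P$.

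The key step is to show that $\tilde F$ is stable. Let $\phi$ be a compactly supported normal variation on $\C\setminus\tilde P$ with negative second variation $Q(\phi)<0$. Its support $K$ is compact, so it is contained in a fundamental-domain-sized region that is large relative to the deck group. For the torus, choose $N$ so that $K$ lies in the interior of a fundamental domain of the sublattice $N\Lambda$. For $\C^*$, choose $N$ so that $K$ lies in a strip $\{|\mathrm{Im}\, z|<\pi N\}$ of the cover $\C\to\C^*$, which is periodic in $2\pi i N$. Then $\phi$ descends injectively to a compactly supported variation of $F\circ p_N$, where $p_N:\Sigma_N\to\Sigma$ is the $N$-fold cover: for the torus, $\Sigma_N=\C/N\Lambda$; for $\C^*$, the map $w\mapsto w^N$, with the punctures $P$ pulled back. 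The second variation is a local integral, so it is unchanged under this identification, and $Q(\phi)<0$ contradicts the stability of $F\circ p_N$. Hence $\tilde F$ is stable.

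We conclude with Theorem \ref{thm: main}: $\tilde F=T\circ G$ with $G$ holomorphic into $\C^m$ for some orthogonal complex structure $J$ on an even-dimensional affine subspace. Since $\tilde F$ is deck-invariant, its image equals the image of $F$, so the affine span and the complex structure $J$ are the same for $F$. The condition "$dF$ is $J$-complex linear with respect to the complex structure of $\Sigma$", namely $dF\circ i=J\,dF$, is local, and $\pi$ is a local biholomorphism. So this condition holds for $F$ exactly because it holds for $\tilde F$, which makes $F$ holomorphic up to a rigid motion.

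I expect the main obstacle to be the bookkeeping of the second variation at branch points and at the punctures $P$, making sure the descended variation is admissible. Compact support away from $\tilde P$ handles the punctures. The branch points are handled by the second variation formula of \cite{Micallefnote}, which is local and holds on any cover, so the contradiction argument goes through.
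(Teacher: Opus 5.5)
Your proposal is correct and follows essentially the paper's argument: lift to $\C\setminus\pi^{-1}(P)$, invoke Theorem \ref{thm: main}, and push a compactly supported destabilizing variation down to the finite cover $(\C\setminus\pi^{-1}(P))/N\Gamma$ once $N$ is large enough that its support misses all of its nontrivial translates. The only differences are that you argue the contrapositive and spell out that holomorphicity of $F\circ\pi$ descends to $F$ (which the paper uses tacitly), while the paper explicitly cites Chern--Osserman to justify that the points of $P$ are conformal punctures, so that $\Sigma$ really is $\C/\Gamma$ for the induced complex structure, a point you take for granted.
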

Taking $\Sigma$ to be compact of genus $1$ and $F$ to be an isometric immersion, we obtain Theorem 1.1 from \cite{FS}. Another special case of Corollary \ref{thm: FS} comes from taking $\Sigma=\C^*$ and $P=\emptyset$. Then we get that any covering stable branched minimal immersion from $\C^*$ to $\R^n$ is holomorphic up to a rigid motion. This result is not contained in Theorem \ref{thm: main}, since we could have infinite curvature at $0$. We should point out that we are not aware of any minimal surface as in Corollary \ref{thm: FS} that is stable but not covering stable.

\begin{remark}\label{rem: drop completeness}
To further understand the scope of Theorem \ref{thm: main_C_case}, if we assume finite total curvature but don't impose completeness, then the Weierstrass-Enneper data is of the form $(f\alpha_1,\dots, f\alpha_n)$, where $\alpha_1,\dots, \alpha_n$ are polynomial holomorphic $1$-forms and $f$ is an arbitrary holomorphic function (see the proof of Theorem 4.1.1 from \cite{AlarconLopezForstneric2021}).
\end{remark}

\begin{remark}\label{rem: stability}
Note we're considering stability of a map, rather than stability of a surface (see \S \ref{sec: basics} for definitions). In particular, ``$F$ is stable" means that $F$ is stable with respect to variations of the map, including those that might change the topology of $F(\Sigma)$. The example to have in mind is a branched minimal immersion $F: \C\to \R^n$ that factors through the standard covering $\C\to \C^*$; variations might change the topology of $F(\C)$ from a cylinder to a plane, and hence we have not proved that the minimal cylinder is stable if and only if it's holomorphic. Of course, for isometric minimal immersions, the notions of stability coincide.
\end{remark}
\begin{remark}
    Lawson gave a characterization of the generalized Gauss map of a holomorphic minimal surface in \cite[Proposition 1.6]{Law}.
\end{remark}

\subsection{On the proofs}
One of the key insights from \cite{Micallef} is that if one can find splittings of the complexified tangent and normal bundles satisfying certain properties, then the minimal immersion is holomorphic (see \cite[Theorem A]{Micallef}). The proofs of \cite[Theorem IV]{Micallef} and \cite[Theorem 1.1]{FS} proceed by constructing such splittings, using the extended Gauss map to extend the normal bundle over the compactified surface and then relying on known classifications of holomorphic vector bundles over closed surfaces of genus $0$ and $1$. Our proof of Theorem \ref{thm: main_C_case} is totally different. We make use of the Weierstrass-Enneper data of minimal surfaces and a destablization strategy for minimal surfaces introduced by the first named author and Markovi{\'c} in \cite{MS} (see \S \ref{sec: destabilizing} for an overview of the method). Briefly, in \cite{MS}, the space of variations of a minimal disk is identified with the tangent space at the identity of the universal Teichm{\"u}ller space. This leads to a model for computation that gives a new way to construct and study interesting variations.

To prove Theorem \ref{thm: main_C_case}, we first observe that holomorphicity is equivalent to an isotropy condition on the holomorphic jets of $F$ (Theorem \ref{thm:main2}). We then use \cite[Theorem C]{MS} to produce, for each $N$, explicit compactly supported variations of $F(\C)$ that have certain homogeneity properties and rotational symmetries. Due to basic Fourier orthogonality, these variations see the holomorphic jets only up to order $N$ (see Theorem \ref{thm: thmc}, Lemma \ref{lem: main computation}). We use these variations to prove by induction on $N$ that a stable map satisfies the isotropy condition. Using \cite{MS}, the computation is elementary and tractable (but delicate). Notably, larger growth of the minimal surface and curvature at infinity do not lead to conceptual differences in the proof.

Theorem \ref{thm: main} is proved using Theorem \ref{thm: main_C_case}. The completeness and curvature conditions imply that the Weierstrass-Enneper data $(\alpha_1,\dots, \alpha_n)$ for $F:\C\backslash P \to \R^n$ is meromorphic at $P$. By Weierstrass factorization, there exists a holomorphic function $f$ such that $(f\alpha_1,\dots, f\alpha_n)$ extends holomorphically to $\C$ and hence determines a new branched minimal immersion $F_f:\C\to \R^n$, which has the same generalized Gauss map as $F$. The holomorphicity of $F$ and $F_f$ are easily seen to be equivalent, and by the log cut-off trick, the stability is equivalent too. Essentially, replacing $F$ with $F_f$ pushes all of the singular behaviour to the end at $\infty$, while preserving properties of interest, and allows us to apply Theorem \ref{thm: main_C_case}.

To prove Corollary \ref{thm: FS}, we use that $F(\Sigma\backslash P)$ is covered by a minimal surface coming from Theorem \ref{thm: main}. Hence, if $F$ is not holomorphic, the minimal surface has an unstable cover. This does not immediately prove the corollary, because the covering map has infinite degree. However, the covering can in some sense be approximated by finite covers, and with this we're able to fit an unstable subsurface into a sufficiently large finite cover.

Going back to Theorem \ref{thm: main_C_case}, since the variations resulting from \cite{MS} play so well with the isotropy condition, we hope that the ideas introduced here will be of further use in studying Micallef's question, and for related questions about more general minimal surfaces.

\subsection{Explicit destabilization radii}
A branched minimal immersion $h:\C\to \R^n$ is unstable if and only if there is a minimal $r_0>0$ such that for $r>r_0$, $h|_{\overline{\mathbb{D}}_r}$ is unstable as a branched minimal immersion of a surface with boundary. Here, $\mathbb{D}_r=\{z\in \C: |z|<r\}$.  We refer to this $r_0$ as the \textit{destabilization radius} of $h$. The method of \cite{MS} and the proof of Theorem \ref{thm: main_C_case} give a way to compute or estimate the destabilization radius by studying zeros of polynomials. The method could also be applied on a minimal surface of any topological type that contains a large isometrically embedded disk.

We thought it worthwhile to demonstrate an estimate of the destabilization radius in the simplest case: for maps from $\C$ to $\R^3$. Carrying out a simplified version of the proof of Theorem \ref{thm: main_C_case}, we arrive at the theorem below. For the statement, recall that via the Weierstrass-Enneper representation, a branched minimal immersion on $\C$ is equivalent to a holomorphic function $f$ and a meromorphic function $g$ such that $fg^2$ is holomorphic. The function $g$ is the stereographic projection of the Gauss map. 

\begin{thm}\label{thm: radius}
    Let $r>0$ and let $h:\overline{\mathbb{D}}_r\to \R^3$ be a branched minimal immersion determined by Weierstrass-Ennerper representation $(f,g)$, where 
    $$f(z) = \sum_{j=k}^\infty b_j z^j, \hspace{1mm} g(z)=a_0 + \sum_{j=m}^\infty a_j z^j,$$ with $m>0$, $a_m, b_k\neq 0$. If $r$ is strictly larger than the smallest positive root of the polynomial
\begin{equation}\label{eq:polynomial_holomorphic_case}
    P_m(r):=-m\vert b_ka_m\vert^2r^{2m}+\sum_{j=0}^{m-1}(m-j)\vert b_{k+j}\vert^2(1+\vert a_0\vert^2)^2r^{2j},
\end{equation}
then $h$ is unstable.
\end{thm}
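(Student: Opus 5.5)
The plan is to carry out, in the special case $n=3$, the destabilization scheme behind the proof of Theorem \ref{thm: main_C_case}. Instability will be shown by applying the criterion of Theorem \ref{thm: thmc} (the version of \cite[Theorem C]{MS} used earlier) to an explicit, rotationally homogeneous family of Beltrami differentials supported on $\overline{\mathbb{D}}_r$. The hope is that, after Fourier orthogonality, the resulting quadratic form is a positive multiple of $P_m(r)$.

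\emph{Step 1: normalize.} Apply a rotation of $\R^3$, which preserves stability, so that the Gauss map at $0$ is the north pole, i.e.\ $a_0=0$. The new Weierstrass data are $\tilde g=(g-a_0)/(1+\bar a_0 g)$ and $\tilde f=f(1+\bar a_0 g)^2$, up to a unimodular constant. I expect the factor $(1+|a_0|^2)^2$ in \eqref{eq:polynomial_holomorphic_case} to arise from this change, via the relation
$$\tilde g = \frac{a_m}{1+|a_0|^2}\,z^m+O(z^{m+1}),$$
while $\tilde f$ agrees with $f$ (times a constant) through order $k+m-1$. I will check whether it is cleaner to track $(1+|a_0|^2)$ directly instead of normalizing. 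Next, write the Hopf differentials of the three coordinates as
$$\phi_1=\tfrac14 f^2(1-g^2)^2,\qquad \phi_2=-\tfrac14 f^2(1+g^2)^2,\qquad \phi_3=f^2g^2,$$
up to the standard normalizations, with $\sum\phi_i=0$. Their Taylor coefficients up to the relevant order involve only $b_k,\dots,b_{k+m-1}$ and $a_m$.

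\emph{Step 2: choose the variation.} Following Lemma \ref{lem: main computation}, take Beltrami differentials $\mu_i$ on $\mathbb{D}_r$ of the form $\bar z^{\,p}z^{q}\,d\bar z/dz$ times constants, with exponents chosen so that the $\mu_i$ have a prescribed rotational weight $N$ tied to $m$. Fourier orthogonality then makes each pairing $\int\phi_i\mu_i$ and each energy term see only finitely many coefficients of $f$ and $g$: the $j$-th term of $f$ pairs against the monomial of matching weight. Integrating $|z|^{2j}$ over $\mathbb{D}_r$ produces the powers $r^{2j}$, and the prefactor $(m-j)$ should come from the radial integration or derivative weights.

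\emph{Step 3: reduce to $P_m$.} Plug these $\mu_i$ into the functional of Theorem \ref{thm: thmc}. The negative term $-m|b_ka_m|^2r^{2m}$ should come from the coupling through $\phi_3\sim b_k^2a_m^2z^{2k+2m}$. The positive terms should come from the Dirichlet-type energy of the infinitesimal solutions, which collects the $|b_{k+j}|^2$, $j<m$. Optimizing the free constants then gives a quantity equal to a positive multiple of $P_m(r)$. Since $P_m(0)>0$ and the leading coefficient is negative, $P_m$ is negative just past its smallest positive root. By the sign change there, and the monotonicity of instability in $r$, $h$ is unstable whenever $r$ exceeds that root.

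\emph{Main obstacle.} The delicate step is Step 3: computing the energy term in Theorem \ref{thm: thmc} exactly. This requires solving the $\bar\partial$-problem for the infinitesimal deformation induced by $\mu_i$ on the disk, and checking that the cross terms between different $j$ genuinely cancel by orthogonality while the diagonal terms come out with the clean weights $(m-j)$. I would first do the case $m=1$ by hand to fix the exponents $p,q$, and then induct on the weight as in the proof of Theorem \ref{thm: main_C_case}.
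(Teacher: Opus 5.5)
Your overall plan matches the paper's: feed a rotationally homogeneous test function into Theorem~\ref{thm: thmc}, use Fourier orthogonality, and finish with the Weierstrass data. Normalizing $a_0=0$ first is a legitimate simplification of the paper's direct computation. The gap is that Steps~2--3, which make up the entire proof, are only stated as expectations, and the framework you set up for them does not fit Theorem~\ref{thm: thmc}.

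Theorem~\ref{thm: thmc} does not take Beltrami differentials as input. It takes a single $\varphi\in\mathcal{V}$, holomorphic outside the disk, and the data on the disk are then forced to be the Poisson extensions $v_i=P(h^r_{i,z}\varphi)$. So there are no $\mu_i$ to choose and no $\bar\partial$-problem to solve. Monomial $\mu_i$ would give a non-optimal self-maps variation whose energy you would have to compute from scratch.

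The variation that works is $\varphi(z)=\gamma z^{-(k+m)}$, and its weight must depend on $k$, not just on $m$. For $\gamma z^{-M}$ with $M<k+m$, every pairing term below vanishes and $\mathcal{F}_{h^r}\ge 0$. For a monomial, Lemma~\ref{lem: main computation} at $y=0$ already gives
\[
\mathcal{F}_{h^r}(\gamma z^{-M})=\pi\sum_i C(h^r_i,\gamma,M)
\]
in closed form, cross terms included. So the ``main obstacle'' you name is not where the work is, and no induction on the weight is needed.

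What is actually missing is the algebraic step. With $c^i_j$ the Taylor coefficients of $h_{i,z}$, you need, for $0\le j\le m-1$,
\[
\sum_{i=1}^3 c^i_{k+j}c^i_{k+2m-j}=-\tfrac12\, b_k^2a_m^2\,\delta_{j,0},\qquad \sum_{i=1}^3 |c^i_{k+j}|^2=\tfrac12(1+|a_0|^2)^2|b_{k+j}|^2 .
\]
The first identity does not come from orthogonality. It involves the higher coefficients $c_{k+2m-j}$, which depend on $b_{k+2m-j},\dots$, and these cancel only because of the null structure of the data.

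Your normalization makes this cancellation transparent. With $a_0=0$, one has $c_{k+j}=\tfrac12\tilde b_{k+j}(1,i,0)$ for $j<m$. Pairing $(1,i,0)$ with $c_q$ gives the $q$-th coefficient of $\alpha_1+i\alpha_2=-\tilde f\tilde g^2$, which is $0$ for $q<k+2m$ and equals $-\tilde b_k\tilde a_m^2$ at $q=k+2m$. So the negative term does not come ``through $\phi_3$''; in fact $c^3_k=0$. Choosing $\arg\gamma$ with $\gamma^2b_k^2a_m^2>0$ then gives $\mathcal{F}_{h^r}(\varphi)=\tfrac{\pi}{2}|\gamma|^2r^{2k+2}P_m(r)$.

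Two smaller points:
\begin{enumerate}
\item Your $\tilde f$ is missing a factor $(1+|a_0|^2)^{-1}$. This is harmless, since $P_m$ is quadratic in $f$.
\item $P_m(0)>0$ together with a negative leading coefficient does not by itself make $P_m$ negative just past its first positive root. Instead, note that as a polynomial in $r^2$, $P_m$ has exactly one sign change in its coefficients. By Descartes' rule it therefore has a unique, simple positive root and is negative beyond it.
\end{enumerate}
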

Note that $P_m(0)=0$ and $\lim_{r\to\infty} P_m(r)=-\infty$, so the theorem makes sense. In Theorem \ref{thm: radius}, generically, $g'(0)\neq 0$, and in this case the result simplifies.
\begin{cor}\label{cor: radius}
In the setting of the theorem above, assume that $g'(0)\neq0$. Then, for $$r>\frac{1+\vert g(0)\vert^2}{\vert g'(0)\vert},$$ $h$ is unstable.
\end{cor}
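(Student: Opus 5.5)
The plan is to apply Theorem \ref{thm: radius} directly. The hypothesis $g'(0)\neq 0$ forces $m=1$, so the polynomial $P_m$ collapses to a quadratic in $r$ whose positive root is exactly $(1+|g(0)|^2)/|g'(0)|$.

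First I identify the coefficients. In the expansion $g(z)=a_0+\sum_{j\ge m}a_jz^j$ we have $a_0=g(0)$. The coefficient of $z$ in the Taylor series of $g$ at $0$ is $g'(0)$. Since this is nonzero, the smallest index $m>0$ with $a_m\neq 0$ is $m=1$, and $a_1=g'(0)$. (I assume, as the Weierstrass–Enneper representation of a nonconstant branched minimal immersion requires, that $g$ is holomorphic near $0$ and $f\not\equiv 0$, so that $k$ and $b_k\neq 0$ exist.)

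Next I compute $P_1$. Only the $j=0$ term survives in the sum in \eqref{eq:polynomial_holomorphic_case}, giving
$$P_1(r)=-|b_ka_1|^2r^{2}+|b_k|^2(1+|a_0|^2)^2=|b_k|^2\left((1+|a_0|^2)^2-|a_1|^2r^2\right).$$
Because $b_k\neq 0$ and $a_1\neq 0$, the unique positive root is
$$r_*=\frac{1+|a_0|^2}{|a_1|}=\frac{1+|g(0)|^2}{|g'(0)|}.$$
Hence for any $r>r_*$, Theorem \ref{thm: radius} applied to $h$ on $\overline{\mathbb{D}}_r$ shows that $h$ is unstable, which is the claim.

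There is no real obstacle here. The only point needing care is confirming that $m=1$ is forced, i.e. that $a_1$ coincides with $g'(0)$ rather than a higher coefficient, and that $b_k\neq 0$ so the root is well-defined and independent of $f$. Both follow from the normalization in the statement of Theorem \ref{thm: radius}.
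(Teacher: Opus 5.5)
Your proposal is correct and follows the paper's own proof: $g'(0)=a_1\neq 0$ forces $m=1$, so $P_1(r)=-|b_ka_1|^2r^2+|b_k|^2(1+|a_0|^2)^2$. Its unique positive root is $(1+|g(0)|^2)/|g'(0)|$, and Theorem \ref{thm: radius} then gives instability for all larger $r$.
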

\begin{proof}
    Applying $\eqref{eq:polynomial_holomorphic_case}$ with $m=1$ yields the polynomial
    $$P_1(r)=-\vert b_k a_1\vert^2 r^{2}+\vert b_k\vert^2(1+\vert a_0\vert^2)^2.$$
    The only positive root of $P_1$ is $\frac{1+\vert a_0\vert^2}{\vert a_1\vert}$, and the result follows. 
\end{proof}
Theorem \ref{thm: radius} assumes that $g$ is holomorphic at $0$, but this is not important. Indeed, we can assume that $g$ does not surject onto $\C$, since then $h$ is already unstable by \cite{Sch}. Thus, using an orthogonal transformation of $\R^3$, one can make the Gauss map miss the north pole, so that $g$ becomes holomorphic. By Corollary \ref{cor: radius}, we obtain that for $g$ meromorphic with a simple pole at $0$ with residue $a_{-1}$, $h$ is unstable when $r>|a_{-1}|.$

Corollary \ref{cor: radius} can be reinterpreted in terms of the spherical area of $h$, i.e., the area of the image of the Gauss map in the round metric. For each $s<r$, we approximate the spherical area $\mathcal{A}(\cdot)$ of $h|_{\overline{\mathbb{D}}_s}$ as $$\mathcal{A}(h|_{\overline{\mathbb{D}}_s})=\int_{\mathbb{D}_s} \frac{4|g'(z)|^2}{(1+|g(z)|^2)^2} dxdy =\frac{4\pi|g'(0)|^2}{(1+|g(0)|^2)^2} s^2 + O(s^3).$$
Thus, writing $\mathcal{A}(h|_{\overline{\mathbb{D}}_s})=\mathcal{A}_2(h) s^2 + O(s^3)$, a restatement of Corollary \ref{cor: radius} is that if $$\mathcal{A}_2(h) r^2>4\pi,$$ then $h$ is unstable. From this standpoint, it can be seen as a partial converse of a theorem of Barbosa and Do Carmo \cite{BdC}, which says that if $\mathcal{A}(h)<2\pi$, then $h$ is stable (see also the generalization to $\R^n$ \cite{Carmo1980}).

\begin{remark}
    Another partial converse of the result from \cite{BdC} is an earlier result of Schwarz \cite{Sch}, which says that if the first Dirichlet eigenvalue of the image of the Gauss map is smaller than $2$, then $h$ is unstable. The connection is made clear by the standard corollary that if the Gauss map image contains a hemisphere (the spherical area is $2\pi$), then $h$ is unstable. We point out that Schwarz's result can be used to prove a weaker version of Corollary \ref{cor: radius}: assuming $g(0)=0$ for simplicity, by the Koebe $1/4$-Theorem, the Gauss map image contains a hemisphere when $r>\frac{4}{|g'(0)|}.$
\end{remark}

\subsection{Acknowledgments}
This paper is an outgrowth of Thomas-Ren\'e Thalmaier's bachelor's thesis (``m{\'e}moire") completed at the University of Luxembourg in the 2024-2025 academic year, under the supervision of Nathaniel Sagman.

\section{Preliminaries}

\subsection{Branched minimal immersions}\label{sec: basics}
Let $\Omega$ be a compact surface with $C^\infty$ boundary $\partial \Omega$ and let $\delta$ be the Euclidean metric on $\R^n$. One definition of a branched minimal immersion to $\R^n$ is a $C^\infty$ map $h:\Omega\to \R^n$ that extends continuously to $\overline{\Omega}$, and that is a critical point of the area functional 
\begin{equation}\label{eq: area functional}
    A(h(\Omega))=\int_{\Omega}\det h^*\delta,
\end{equation}
where variations are $C^\infty$ functions $N:\Omega\to \R^n$ that extend to $0$ on $\partial\Omega$. The image is called a branched minimal surface. A branched minimal immersion is said to be stable if for all variations $W$, 
\begin{equation}\label{eq: area unstable}
    \frac{d^2}{dt^2}|_{t=0}A(h+tW)\geq 0.
\end{equation}
Otherwise, we say it is unstable. If $\Omega$ is an open surface, a map $h:\Omega\to \R^n$ is a (stable) branched minimal immersion if when we restrict to any compact subsurfaces with $C^\infty$ boundary, we get a (stable) branched minimal immersion.

As in Remark \ref{rem: stability}, we emphasize we're considering variations of the map $h$, rather than variations of the surface $h(\Omega)$. For the latter, it would be natural to demand that $h$ is an isometric immersion, or that variations preserve the topology of $h(\Omega).$

For later use, we record the standard formula for the second variation of area. Let $\Omega$ be compact with $C^\infty$ boundary, and $h:\Omega\to \R^n$ a minimal immersion (note: we're assuming unbranched). Let $g=h^*\sigma$ and let $Nh(\Omega)$ be the normal bundle with the metric induced from $\delta$ on $\mathbb R^n$. For any normal variation $W$, i.e., $W$ inducing a section of $Nh(\Omega)$,
\begin{equation}\label{eq: second variation classical}
     \frac{d^2}{dt^2}|_{t=0}A(h+tW)
=
\int_\Omega
(
|\nabla^\perp W|_g^2
-
\sum_{i,j=1}^2 \langle \textrm{II}_h(e_i,e_j),W\rangle^2
)dV_g,
\end{equation}
where $\nabla^\perp$ is the normal connection, $|\cdot|_g$ is the norm induced by $g$ and $\delta$ on $T^*\Omega\otimes Nh(\Omega)$, $\textrm{II}_h$ is the second fundamental form, $(e_1,e_2)$ is any local $g$-orthonormal frame for the tangent space, $\langle \cdot,\cdot\rangle$ is Euclidean inner product on $\mathbb R^n$, and $dV_g$ is the volume form. When $h$ is branched, the formula (\ref{eq: second variation classical}) can be suitably reinterpreted to still hold (see \cite{Micallefnote}). 

\subsection{Weierstrass-Enneper data} 
Henceforth, for domains inside $\C$, we fix a global source coordinate $z$.
For a $C^1$ function $h$ on such a domain, we write $h_z=\frac{\partial h}{\partial z}$, and similar for $h_x$ and $h_y$. We often consider functions to $\R^n$, written in components as $(h_1,\dots, h_n)$; we write $h_{i,z}=\frac{\partial h_i}{\partial z}.$

Assume that $\Omega$ is a simply connected domain in $\C$, so that it inherits $z$. It is well known that $h:\Omega\to \R^n$ is a branched minimal immersion if and only if it is harmonic and conformal. Writing $h$ in components as $h=(h_1,\dots, h_n)$, harmonicity of $h_i$ is equivalent to the holomorphicity of the $1$-form $\alpha_i =h_{i,z} dz$, and conformality is equivalent to the relation 
\begin{equation}\label{eq:conformal}
    \sum_{i=1}^n \alpha_i^2 =0.
\end{equation}
Conversely, given any holomorphic $1$-forms $\alpha_i$ satisfying (\ref{eq:conformal}), integrating the real parts determines a branched minimal immersion $$h(z)=\Big (\int_{z_0}^z\Re\alpha_1,\dots,\int_{z_0}^z\Re\alpha_n\Big).$$
The data $(\alpha_1,\dots, \alpha_n)$ is called the Weierstrass-Enneper data of $h$. Evidently, this data determines $h$ up to translation.

When $\Omega$ is not simply connected, the same correspondence holds, except one has to restrict to $1$-forms with purely imaginary periods. Indeed, $h$ produces $(\alpha_1,\dots, \alpha_n)$ as before, and to go backwards one has to lift to the universal cover and then integrate the real parts. The period condition ensures that the branched minimal immersion descends from the universal cover back to $\Omega.$

We recall that a complex structure on an affine subspace of $\R^n$ is orthogonal if the almost complex structure preserves the Euclidean metric $\delta$, and we defined a map to $\R^n$ to be holomorphic up to a rigid motion if it lands in an affine subspace $A$ in which it is holomorphic for some orthogonal complex structure. If $J$ is an orthogonal complex structure on an affine subspace of $\R^n$, and $j$ is the ordinary complex structure on $\C$, the holomorphicity condition for a map $h$ is written as $dh\circ j =J\circ dh$. Splitting $h_z =h_x+ih_y$, this is equivalent to 
\begin{equation}\label{eq: holomorphicity with J}
    Jh_x=h_y, \hspace{1mm} Jh_y=-h_x.
\end{equation}
The theorem below explains how to read off holomorphicity for maps from $\C\to \R^n$ from the Weierstrass-Enneper data. Let $g:\C^n\times \C^n\to\C$ be the standard bilinear form $$g(u,v)=\sum_{i=1}^n u_iv_i.$$ Given a branched minimal immersion $h=(h_1,\dots, h_n):\C\to \R^n$, we can write $h_{i,z}=\sum_{j=0}^\infty c_j^i z^j$ and note that $h$ is equivalent (up to translations) to the sequence of vectors $(c_j)_{j=0}^\infty\in \C^n$, where $c_j=(c_j^1,\dots, c_j^n).$
\begin{thm}
\label{thm:main2}
    The map $h$ is holomorphic up to a rigid motion if and only if for all $m,k,$ $g(c_m,c_k)=0.$
\end{thm}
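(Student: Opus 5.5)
The plan is to reduce both directions to a statement about the complex vector $h_z$. Write $h_z=\sum_j c_j z^j$ and let $V\subset\C^n$ be the complex span of the $c_j$, so $h_z$ takes values in $V$. The condition $g(c_m,c_k)=0$ for all $m,k$ says exactly that $V$ is totally isotropic for the complex bilinear form $g$. Equivalently, since $h_z(z)$ ranges over a set spanning $V$ (vary $z$ and use that the $c_j$ are the Taylor coefficients), the condition says $g(h_z(z),h_z(w))=0$ for all $z,w\in\C$.

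\emph{Holomorphic implies isotropic.} Suppose $h=T\circ F$, where $T(x)=Ox+b$ is a rigid motion and $F=(F_1,\dots,F_m)$ is holomorphic into $\C^m\subset\R^n$, viewed in real coordinates $(\Re F_1,\Im F_1,\dots)$. The real coordinates of $F_z$ are $(F_{1,z}',-iF_{1,z}',\dots)$ up to a factor, because $\partial_z \Re F_j=\tfrac12 F_j'$ and $\partial_z\Im F_j=-\tfrac i2 F_j'$. Hence $F_z(z)$ lies in the isotropic subspace spanned by the vectors $e_{2j-1}-ie_{2j}$. Applying $O$ preserves $g$, since $O$ is real orthogonal and $g$ is its complex-bilinear extension. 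Translations do not affect $h_z$. So $V$ is isotropic.

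\emph{Isotropic implies holomorphic.} This is the main step. Let $V$ be isotropic. Then $V\cap\overline V=0$: if $v\in V\cap \overline V$, then $\bar v\in V$, so $0=g(v,\bar v)=|v|^2$. Set $W=V\oplus\overline V$. This subspace is conjugation-invariant, so it is the complexification of a real subspace $A_0=W\cap\R^n$ of real dimension $2\dim_\C V$, which is even.

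Define a real operator $J$ on $A_0$ by $J=i$ on $\overline V$ and $J=-i$ on $V$. This commutes with conjugation, so it is real, and $J^2=-1$. To see that $J$ is orthogonal, use that $V$ and $\overline V$ are isotropic. For $u=v+\bar v$ with $v\in V$, we have $|u|^2=2g(v,\bar v)$ and $Ju=-iv+i\bar v$, so $|Ju|^2=g(-iv+i\bar v,-iv+i\bar v)=2g(v,\bar v)$, using that the cross terms $g(v,v)$ vanish.

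Next, $h_x=2\Re h_z$ and $h_y=-2\Im h_z$, so both lie in $A_0$. Hence $h$ lands in the affine subspace $h(0)+A_0$. Finally check \eqref{eq: holomorphicity with J}. Since $h_x-ih_y=2h_z\in V$, we get $J(h_x-ih_y)=-i(h_x-ih_y)=-h_y-ih_x$. Comparing real and imaginary parts gives $Jh_x=-h_y$ and $Jh_y=h_x$. This is the conjugate convention, so we replace $J$ by $-J$, which is still an orthogonal complex structure, and obtain $Jh_x=h_y$ and $Jh_y=-h_x$.

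The only delicate points are sign conventions and the fact that $V\cap\overline V=0$. No analytic difficulty arises, since $h_z$ is entire and its coefficients span the same space as its values.
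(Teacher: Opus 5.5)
Your proof is correct and is essentially the paper's argument. Both take $L=\mathrm{Span}_{\C}(c_j)$, use isotropy together with positive-definiteness of $\delta$ to get $L\cap\overline{L}=0$, build an orthogonal complex structure on $\Re(L)$ having $L$ as an eigenspace, and then integrate $h_z$; your $-J$ is exactly the $J$ that Lemma \ref{lem: isotropy} defines through the real-part isomorphism $L\to\Re(L)$.

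The only difference is in the forward direction. You compute in the normal form $T\circ F$, whereas the paper works directly with its definition (an orthogonal $J$ on an affine subspace) and observes that the $i$-eigenspace of any orthogonal complex structure is $g$-isotropic; this spares the paper the standard normal form for orthogonal complex structures, which your route quietly relies on.
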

The geometric content of Theorem \ref{thm:main2} is that $h$ is holomorphic up to a rigid motion if and only if every holomorphic $k$-jet $\frac{1}{(k-1)!}\partial_z^k h(0)$ lies in the same isotropic subspace of $\C^n$. The theorem follows from a general lemma. Given a complex subspace $L$ of $\C^n$, we set $\Re(L)\subset \R^n$ to be the image of $L$ under the real part map $u\mapsto \Re (u)$, and $\Im (L)\subset \R^n$ to be the image under the imaginary part map.
\begin{lem}\label{lem: isotropy}
    Let $L\subset \C^n$ be a complex subspace and $W=\Re(L)\oplus \Im (L) \subset \R^n$. Then $L$ is $g$-isotropic if and only if $W$ carries an orthogonal complex structure $J$ such that $L$ is the $i$-eigenspace.
\end{lem}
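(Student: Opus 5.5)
We prove the two implications separately, writing $u=a+ib$ with $a,b\in\R^n$ for a vector $u\in\C^n$. The basic identity is
$$g(u,u)=|a|^2-|b|^2+2i\langle a,b\rangle,$$
so $g(u,u)=0$ exactly when $|a|=|b|$ and $a\perp b$. Polarizing, $L$ is $g$-isotropic if and only if $g(u,u)=0$ for every $u\in L$.

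For the direction ($\Leftarrow$), suppose $J$ is an orthogonal complex structure on $W$ and $L$ is the $i$-eigenspace of its complex-linear extension to $W\otimes\C$. For $u=a+ib\in L$, the relation $Ju=iu$ gives $Ja=-b$ and $Jb=a$. Since $J$ is orthogonal and $J^2=-1$, we have $\langle Jv,v\rangle=0$ for all $v$. Hence $|b|=|Ja|=|a|$ and $\langle a,b\rangle=-\langle a,Ja\rangle=0$, so $g(u,u)=0$ and $L$ is isotropic.

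For the direction ($\Rightarrow$), assume $L$ is isotropic.

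1. First I show $L\cap\bar L=0$. If $u\in L\cap\bar L$, then $\bar u\in L$, so $0=g(u,\bar u)=\sum_i|u_i|^2$, and therefore $u=0$.
2. Next I identify $W$. We have $\Re(L),\Im(L)\subset\R^n$, and $\Re(L)=\Im(L)$ because $\Im(u)=\Re(-iu)$ and $L$ is complex. So $W$ should be read as $\Re(L)$, or as the real span $\Re(L)+\Im(L)$. Its complexification is $L+\bar L=L\oplus\bar L$, using step 1. Consequently $\dim_\R W=2\dim_\C L$, and the map $\Re\colon L\to W$ is a real-linear bijection: it is surjective by definition, and injective since $\Re(u)=0$ forces $u=i b$ with $b$ real, which lies in $L\cap\bar L=0$.
3. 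Define $J$ on $W$ by $J(\Re u)=\Re(-iu)=\Im u$. This is well defined by step 2, and $J^2=-1$. The intended $i$-eigenspace of $J_\C$ is $L$: for $u=a+ib\in L$ we have $Ja=b$, and $-iu=b-ia\in L$ gives $Jb=-a$. Then $J_\C u=Ja+iJb=b-ia=-iu$, which has the wrong sign. So I take instead $J(\Re u)=-\Im u$, equivalently $J(\Re u)=\Re(iu)$. With this choice $J_\C u=iu$ on $L$, and on $\bar L$ the eigenvalue is $-i$ by conjugation. This matches the convention from the ($\Leftarrow$) direction.
4. Orthogonality of $J$ comes from isotropy. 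For $u=a+ib\in L$ we have $Ja=-b$, and $|a|=|b|$ by the isotropy identity, so $|Ja|=|a|$. Every vector of $W$ is of the form $\Re u$, so $J$ preserves the norm on $W$, hence preserves $\delta$.
5. Finally, uniqueness: a complex structure on $W$ is determined by its $i$-eigenspace, since $W\otimes\C=L\oplus\bar L$.

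The only delicate points are the sign convention and the interpretation of $W$ as a direct sum; neither is a real obstacle.

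Theorem~\ref{thm:main2} then follows by applying the lemma to $L=\operatorname{span}_\C\{c_j\}$. Holomorphicity in the sense of \eqref{eq: holomorphicity with J} says $h_z=h_x-ih_y$ lies in the $i$-eigenspace of $J$, since $J(h_x-ih_y)=h_y+ih_x=i(h_x-ih_y)$. The image of $h$ lies in $h(0)+W$.
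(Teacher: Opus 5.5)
Your proof is correct and follows essentially the paper's argument. For ($\Leftarrow$) you expand $g$ on vectors $a+ib\in L$ with $Ja=-b$, using the quadratic form and polarization where the paper uses the bilinear expansion of $g(X-iJX,Y-iJY)$; for ($\Rightarrow$) you show $\Re\colon L\to W$ is injective by positivity of $\delta$, set $J(\Re u)=-\Im u$, and get orthogonality from isotropy, exactly as the paper does. Your observations that $W$ must be read as $\Re(L)=\Im(L)$ rather than as a literal direct sum, and that $W\otimes\C=L\oplus\bar L$ forces $L$ to be all of the $i$-eigenspace, tighten two points the paper passes over quickly.
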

\begin{proof}
If $W$ admits such a complex structure $J$, the eigenspace condition can be written explicitly as $L=\{X-iJX: X\in W\}$. Expressing
$$g(X-iJX,Y-iJY) = g(X,Y) -g(JX,JY)-i(g(JX,Y)+g(X,JY)),$$ orthogonality gives $g$-isotropy: $g(JX,JY)=g(X,Y)$, $g(JX,Y)=g(J^2X,JY)=-g(X,JY).$

Conversely, assuming $L$ is $g$-isotropic, we claim that the real part map $L\to\Re(L)$ is an isomorphism. Indeed, surjectivity is obvious, and for injectivity, note that the kernel is equal to $i\Im (L)\cap L.$ For $iv\in i\Im (L)\cap L$, $$\delta(v,v) = -g(iv,iv)=0.$$ Positive-definiteness of $\delta$ forces $v=0,$ which establishes injectivity. From the claim, we have that for all $u\in W$, there exists a unique $x=u+iv\in L$ such that $\Re(x)=u$. We define $J:W\to W$ by $J(u)=-v$. Since $ix\in L$, it is clear that $J^2=-I$. It's also easy to see from the definition that $L$ is the $i$-eigenspace. Writing $$\Re g(X-iJX,Y-iJY) = g(X,Y) -g(JX,JY),$$ $g$-isotropy clearly implies orthogonality. 
\end{proof}
\begin{proof}[Proof of Theorem \ref{thm:main2}]
Assume $h$ lives in an even dimensional affine subspace $W+b\subset \R^n$ and is holomorphic for some orthogonal complex structure on $W+b$. Here, $W$ is a real vector subspace and $b\in \R^n$. Then, $h_z(z)\in W$ for all $z$, and \eqref{eq: holomorphicity with J} shows that $h_z(z)$ lies in the $i$-eigenspace $L$ of the almost complex structure. By successively differentiating in $z$ and evaluating at $z=0$, every $c_n$ lies in $L$. Thus, by Lemma \ref{lem: isotropy}, $g(c_m,c_k)=0$ for every $m,k$.

Conversely, if $g(c_m,c_k)=0$ for every $m,k$, we  apply Lemma \ref{lem: isotropy} to the subspace $L=\textrm{Span}_{\C}(c_m: m=0,1,\dots\}.$ Then $W=\textrm{Span}(a_m,b_m: m=0,1,\dots)$ carries an orthogonal complex structure such that $L$ is the $i$-eigenspace. Observe that $h_z(z)\in L$ for every $z$. By integrating $\Re h_z$, we see that $h$ lives in some affine subspace $W+b\subset \R^n$ that inherits an orthogonal almost complex structure $J$, which is integrable since it is constant in coordinates. The $i$-eigenspace condition for $h_z(z)$ is equivalent to (\ref{eq: holomorphicity with J}), i.e., holomorphicity, for $h$.
\end{proof}

\section{Proof of main Theorems}

\subsection{The destabilization scheme}\label{sec: destabilizing}
To prove Theorem \ref{thm: main_C_case}, we use the destabilization scheme proposed by the first named author and Markovi{\'c} in \cite{MS}. We first state the main theorem we use. Afterward, mostly to keep the paper self-contained, we'll explain where the theorem came from. 

In the theorem below, we refer to the vector space $\mathcal{V}$ of first order variations of quasiconformal homeomorphisms of the disk (the tangent space at the identity of the universal Teichm{\"u}ller space). All we need to know about $\mathcal{V}$ (see \cite[\S 4.1]{MS} for the formal definition) is that it consists of holomorphic functions on $\mathbb{C}\backslash\mathbb{D}$ that vanish at $\infty$ and, for every $2<p<\infty$, extend to (not holomorphic) $L^p$ functions on all of $\C$ with certain properties. It is proved in \cite[\S 4.1]{MS} that $\mathcal{V}$ contains all holomorphic functions on $\mathbb{C}\backslash\mathbb{D}$ that vanish at $\infty$ and extend smoothly to $\mathbb{C}$. The prototypical example of $\varphi\in \mathcal{V}$ is $\varphi(z)=z^{-m}$, $m\geq 1$. For the theorem, we also define the functional $\mathcal{F}: C^1(\mathbb{D})\to \R$, 
\begin{equation}\label{eq: functional}
    \mathcal{F}(f) = \Re\int_{\mathbb{D}}f_zf_{\overline{z}}dxdy+\int_{\mathbb{D}}|f_{\overline{z}}|^2dxdy,
\end{equation}
originally from \cite[Section 5]{M2}. 
\begin{thm}[Theorem C in \cite{MS}]\label{thm: thmc}
    Let $h=(h_1,\dots, h_n):\overline{\mathbb{D}}\to \R^n$ be a branched minimal immersion such that no $h_i$ has a zero on $\partial\mathbb{D}$. For $\varphi\in \mathcal{V}$. For each $i$, let $v_i$ be the harmonic extension of $h_{i,z}\cdot \varphi|_{\partial \mathbb{D}}:\partial \mathbb{D}\to\mathbb{C}$. If 
    \begin{equation}\label{eq: instability equation}
        \mathcal{F}_h(\varphi) := \sum_{i=1}^n \mathcal{F}(v_i)<0,
    \end{equation}
then $h$ is unstable.
\end{thm}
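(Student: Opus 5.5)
The plan is to realize $\varphi$ as the infinitesimal generator of a family of reparametrizations of the disk, and to build a competitor family from harmonic extensions of the reparametrized boundary values. Then I would compare area with energy and show that $\mathcal{F}_h(\varphi)<0$ forces a boundary-fixed variation with negative second variation of area.

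\textbf{Step 1: the reparametrizing flow.} Given $\varphi\in\mathcal{V}$, choose a family of quasiconformal self-maps $f_t$ of $\overline{\mathbb{D}}$ with $f_0=\mathrm{id}$ and $f_t=\mathrm{id}+tV+O(t^2)$. Here $V$ is a vector field on $\C$ whose $\bar z$-derivative is the Beltrami differential associated to $\varphi$, and $V|_{\partial\mathbb{D}}$ is tangent to the circle. The formal definition of $\mathcal{V}$ in \cite[\S 4.1]{MS} gives this: up to normalizing constants, $\varphi|_{\partial\mathbb{D}}$ is the boundary value of $V$. For $\varphi$ extending smoothly to $\C$ this is elementary; for general $\varphi\in\mathcal{V}$ I would approximate in $L^p$, $p>2$.

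\textbf{Step 2: the competitor family and comparison inequalities.} Let $H_t$ be the harmonic extension of $h\circ f_t|_{\partial\mathbb{D}}$. Since $f_t$ preserves $\partial\mathbb{D}$, the map $H_t\circ f_t^{-1}$ has the same boundary values as $h$, so it is an admissible boundary-fixed competitor. For it we have
\[
A(H_t\circ f_t^{-1})=A(H_t)\le E(H_t)\le E(h\circ f_t),
\]
using $A\le E$ and the Dirichlet principle. All three quantities coincide at $t=0$, because $h$ is conformal and harmonic.

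\textbf{Step 3: first-order data.} The first-order boundary variation is $\frac{d}{dt}\big|_{0}(h\circ f_t)=2\Re(h_z V)$ on $\partial\mathbb{D}$. Hence $\dot H_0=2\Re(v_1,\dots,v_n)$, where $v_i$ is the harmonic extension of $h_{i,z}\varphi|_{\partial\mathbb{D}}$. Since every map in the family is stationary for area at $t=0$, first derivatives vanish and only second derivatives matter.

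\textbf{Step 4: the second variation (main obstacle).} I then compute
\[
\tfrac{d^2}{dt^2}\big|_{0}E(H_t).
\]
It splits into two parts. The first is the Dirichlet energy of $\dot H_0$. The second is a term coming from $\ddot H_0$, which after integrating by parts involves only boundary data of $h\circ f_t$, hence only $h$, $V$ and $\dot V$. I expect this bookkeeping to be the delicate part: I must show that the $\dot V$ and second-order contributions cancel against the corresponding terms of $A(H_t)$. What should remain is a positive multiple of $\sum_i\mathcal{F}(v_i)$, the $\Re\int v_zv_{\bar z}$ term coming from the conformality defect $E-A$ to second order. Concretely, I would expand $A(H_t)$ and $E(H_t)$ to second order in $t$ and use $\sum_i h_{i,z}^2=0$ together with harmonicity of $h$ to simplify. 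The intended conclusion is
\[
\tfrac{d^2}{dt^2}\big|_{0}A(H_t\circ f_t^{-1})\le c\,\mathcal{F}_h(\varphi)<0 .
\]

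\textbf{Step 5: from a competitor family to a variation.} Set $W=\frac{d}{dt}\big|_{0}(H_t\circ f_t^{-1})$, which vanishes on $\partial\mathbb{D}$. For a boundary-fixed family with first-order term $W$, the second derivative of area equals the second variation in direction $W$, plus a first-variation term in the direction of the second-order part. That extra term vanishes because $h$ is minimal with fixed boundary. So the negative second derivative from Step 4 violates \eqref{eq: area unstable} in direction $W$, after smoothing $W$ and using the approximation of Step 1.

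\textbf{Where the hypotheses enter.} The hypothesis that no $h_i$ vanishes on $\partial\mathbb{D}$ is what I expect to be needed for regularity of the boundary data $h_{i,z}\varphi$ and of $H_t$ up to the boundary. It should also justify differentiating under the integral in Steps 3 and 4.
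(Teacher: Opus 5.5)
Your route differs from the paper's sketch. You reparametrize the boundary by circle maps $\sigma_t$, harmonically extend each coordinate of $h\circ\sigma_t$ to get $H_t$, and use $2A(H_t)\le E(H_t)$ with $E=\int_{\mathbb D}|dH|^2$; this is the Douglas-functional picture. In the paper each coordinate gets its own $f_i^t$, all equal on $\C\setminus\mathbb D$ to a holomorphic $F^t=z+t\varphi+o(t)$. The competitor then lives on the domain bounded by $F^t(\partial\mathbb D)$, so $\varphi|_{\partial\mathbb D}$ is the velocity of a moving curve and need not be tangent, and the $v_i$ arise by optimizing over interior extensions $\varphi_i$. The two pictures are linked by the Riemann map of that domain and conformal invariance of energy, and yours can be made to work, but as written it has two genuine gaps.

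First, Step 1 misidentifies the generator. $\varphi|_{\partial\mathbb D}$ is not tangent to the circle (for $\varphi=z^{-1}$, $\bar z\varphi=\bar z^{2}$ is not imaginary), so it is not, up to constants, the velocity of a flow of self-maps of $\overline{\mathbb D}$. The right choice is the tangent field $V$ with $V+\varphi$ extending holomorphically into $\mathbb D$, e.g. $V=z^2\overline{\varphi}-\varphi=-2iz\,\Im(\bar z\varphi)$ on $\partial\mathbb D$. It is unique modulo infinitesimal M\"obius fields, which do not change the energy. Consequently Step 3 is false: $\dot H_0=2\Re(w_1,\dots,w_n)$ with $w_i=P(h_{i,z}V)=-v_i+h_{i,z}\psi$, where $\psi$ is holomorphic in $\mathbb D$ with boundary values $z^2\overline{\varphi}$. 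You therefore need the extra identity $\sum_i\mathcal F(w_i)=\sum_i\mathcal F(v_i)$. It holds because, for holomorphic $\eta$, $\mathcal F(-v+\eta)=\mathcal F(v)-\Re\int_{\mathbb D}\eta_z v_{\bar z}$, and by Fourier orthogonality
\[
\sum_i\int_{\mathbb D}(h_{i,z}\psi)_z(v_i)_{\bar z}=\frac12\int_0^{2\pi}z\varphi\sum_i h_{i,z}(h_{i,z}\psi)_z\,d\theta=0 .
\]
The last integrand vanishes since $\sum_i h_{i,z}(h_{i,z}\psi)_z=\frac{\psi}{2}\partial_z\sum_i h_{i,z}^2+\psi_z\sum_i h_{i,z}^2=0$.

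Second, Step 4 is the entire content of the theorem, and you only announce its outcome; the mechanism you anticipate is also not the right one. No expansion of $A(H_t)$ is needed, and the bound $E(H_t)\le E(h\circ f_t)$ is useless since $E(h\circ f_t)\ge 2A(h)$. What must be shown is $\frac{d^2}{dt^2}\big|_{t=0}E(H_t)=16\,\mathcal F_h(\varphi)$. Write $\sigma_t(e^{i\theta})=e^{i(\theta+ta+t^2b/2+\cdots)}$ with $V=iza$. Green's formula gives
\[
\tfrac12\tfrac{d^2}{dt^2}\big|_{t=0}E(H_t)=\sum_i\int_{\mathbb D}|\nabla P(2\Re(h_{i,z}V))|^2+\int_0^{2\pi}\langle a^2h_{\theta\theta}+bh_\theta,h_r\rangle\,d\theta .
\]
The $b$-term vanishes by conformality. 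Moreover $\langle h_{\theta\theta},h_r\rangle=-\frac12\partial_r|h_\theta|^2=-\partial_r(r^2|h_z|^2)$ at $r=1$, where $|h_z|^2=\sum_i|h_{i,z}|^2$. Meanwhile $|\nabla(2\Re w)|^2=4|w_z+\overline{w_{\bar z}}|^2$ shows the first sum equals $8\sum_i\mathcal F(w_i)+4\sum_i\int_{\mathbb D}(|w_{i,z}|^2-|w_{i,\bar z}|^2)$. The last term depends only on boundary data and equals $\int_0^{2\pi}a^2\partial_r(r^2|h_z|^2)\,d\theta$, so the two boundary terms cancel exactly. Thus the $\Re\int w_zw_{\bar z}$ part of $\mathcal F$ comes from the Dirichlet energy of $\Re w$, not from the defect $E-2A$.

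Finally, your explanation of the hypothesis on zeros along $\partial\mathbb D$ is off: $h_{i,z}\varphi$ is as regular as $\varphi$, and the hypothesis plays no role in your route. In the paper's route it presumably serves to realize the optimal $v_i$ as $h_{i,z}\varphi_i$ with admissible $\varphi_i$.
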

Note that it is proved in \cite[Theorem B]{MS} that if a branched minimal immersion is unstable, then there exists $\varphi\in \mathcal{V}$ such that (\ref{eq: instability equation}) holds, and that the ordinary Morse index for the area is equal to the corresponding index for $\mathcal{F}_h$ on $\mathcal{V}$.

Theorem \ref{thm: thmc} comes from studying stability through the lens of energy rather than area. Given a branched minimal immersion, $h:\overline{\mathbb{D}}\to \mathbb{C}$,
let $$\mathcal{E}(\mathbb{D},h)=\int_{\Omega}|dh|^2 dx dy$$ be the Dirichlet energy. Suppose one is given an $n$-tuple of paths of maps $t\mapsto f_i^t:\mathbb{C}\to \C$, $i=1,\dots, n$, starting at the identity, all fixing the origin, and agreeing on $\mathbb{C}\backslash \mathbb{D}$ with a holomorphic map $F^t$. Observe that for all $t$ and $i,j$, $f_i^t(\mathbb{D})=f_j^t(\mathbb{D})$, and the image of every $$h^t:=(h_1\circ (f_1^t)^{-1},\dots, h_n\circ (f_n^t)^{-1}):f_1^t(\mathbb{D})\to \R^n$$ has the same boundary curve as $h$. Suppose that
\begin{equation}\label{eq: energy unstable}
    \frac{d^2}{dt^2}|_{t=0}\mathcal{E}(f_1^t(\mathbb{D}), h^t)<0.
\end{equation}
Then, since energy dominates area, one can reparametrize $f_1^t(\mathbb{D})$ so that $h^t$ is identified with a map of the form $h+tN+o(t)$ such that (\ref{eq: area unstable}) fails:
$$  \frac{d^2}{dt^2}|_{t=0}A(f+tN)< 0,$$
i.e., $N$ makes $h$ unstable (see \cite[\S 4.1]{MS} for details). We refer to a path of the form $t\mapsto h^t$ as a self-maps variation. 

As in \cite{MS}, one writes
$$f_i^t(z) = z+t\varphi_i(z) + o(t),$$ for some function $\varphi_i$ that globally extends a function $\varphi\in \mathcal{V}$. Note that for all $i$, $\varphi_i|_{\mathbb{C}\backslash\mathbb{D}}=\varphi$. Finding a destabilization of a branched minimal immersion then reduces to picking $\varphi_1,\dots, \varphi_n$ such that $t\mapsto f^{t\varphi_i}$ leads to (\ref{eq: energy unstable}). In \cite{MS}, the authors compute the second variation of energy in terms of $\varphi_i$'s, which leads to the functional $\mathcal{F}$, and the harmonic extensions considered in the statement of Theorem \ref{thm: thmc} come from taking the optimal choices.

While the route to destabilizing minimal surfaces is less conventional, the advantage of using (\ref{eq: instability equation}) over (\ref{eq: area unstable}) is that it's very tractable computationally. Theorem D and Corollary D from \cite{MS}, and now Theorem \ref{thm: main_C_case}, give a proof of concept.

\subsection{Proof of Theorem \ref{thm: main_C_case}}\label{sec: C case}
To put ourselves in the context of Theorem \ref{thm: thmc}, we note that a branched minimal immersion from $\C$ to $\R^n$ is stable if and only if its restriction to every $\overline{\mathbb{D}}_r=\{z\in \C: |z|\leq r\}$ is stable, and we'll replace maps from $\overline{\mathbb{D}}_r$ to $\R^n$ with maps from $\overline{\mathbb{D}}$ to $\R^n$ by conformally rescaling the source. That is, if $h: \overline{\mathbb{D}}_r \to\R^n$ is minimal, then we'll instead consider $h^r:\overline{\mathbb{D}} \to\R^n$, $h^r(z)=h(zr)$ (not to be confused with variations $t\mapsto h^t)$. We can then say that $h:\mathbb{C}\to \R^n$ is stable if and only if every $h^r:\overline{\mathbb{D}} \to\R^n$ is stable. 

The ``if" direction of Theorem \ref{thm: main_C_case}, that a holomorphic map is stable, is classical. For completeness, and as a warm-up, we give a new proof using Theorem \ref{thm: thmc}.  Given a function $g$ on the circle, we denote the harmonic (Poisson) extension by $P(g).$ In the proof below, and in the rest of this section, to make our expressions cleaner, we omit ``$dxdy$" from our integrals, with the area form $dx\wedge dy$ implicitly understood.

\begin{proof}[Proof of Theorem \ref{thm: main_C_case}, ``if" direction]
    Because we can conformally rescale $\mathbb{D}_r$ to $\mathbb{D}$, it suffices to show that any holomorphic up to a rigid motion $h=(h_1,\dots,h_n):\overline{\mathbb{D}}\to \R^n$ defines a stable branched minimal immersion. By Theorem \ref{thm:main2}, writing $h_{i,z}=\sum_{j=0}^\infty c_j^i z^m$ and $c_j=(c_j^1,\dots, c_j^n)$, we have $g(c_m,c_k)=0$ for all $m,k\geq 0$ ($g$ as in Theorem \ref{thm:main2}). By Theorems B and C from \cite{MS}, stability is equivalent to the assertion that for all test functions $\varphi\in\mathcal{V}$, $$\mathcal{F}_h(\varphi) = \sum_{i=1}^n \mathcal{F}(P(\varphi \frac{\partial h_i}{\partial z})) \geq 0,$$ where we recall that $\mathcal{F}_h$ is defined by (\ref{eq: functional}). For any $\varphi$ and for each $i=1,\dots, n$, we calculate the first term from (\ref{eq: functional}) using linearity of the harmonic extension: 
$$\int_\mathbb{D} \partial_zP(\varphi h_{i,z})\,\partial_{\overline{z}}P(\varphi h_{i,z})=\sum_{m\in\mathbb{Z}}\sum_{k\in\mathbb{Z}}c^i_mc^i_k\int_\mathbb{D}\partial_zP(\varphi z^m)\, \partial_{\overline{z}}P(\varphi z^k).$$ Hence, summing over $i$,
we obtain
$$\sum_{i=1}^n\int_\mathbb{D} \partial_zP(\varphi h_{i,z})\,\partial_{\overline{z}}P(\varphi h_{i,z})=\sum_{k=0}^\infty\sum_{m=0}^\infty g(c_m,c_k)\int_\mathbb{D}\partial_zP(\varphi z^m)\, \partial_{\overline{z}}P(\varphi z^k),$$
which is equal to zero by the condition $g(c_m,c_k)=0$. Thus, $$\mathcal{F}_h(\varphi)=\sum_{i=1}^n \int_{\mathbb{D}} |(P(\varphi \frac{\partial h_i}{\partial z}))_{\overline{z}}|^2\geq 0.$$ By the discussion above, $h$ is stable. 
\end{proof}

The main computation behind the proof of the ``only if" direction is contained in the lemma below.

\begin{lemma}\label{lem: main computation}
    Let $h:\C\to \R$ be a harmonic function with $h_z(z)=\sum_{j=0}^\infty c_j z^j$ and let $h^r$ be as above. For positive integers $k,m$ and $\theta\in [0,2\pi]$ and $y\in\R$, define $\varphi=\varphi_{k,m,\theta,y}\in\mathcal{V}$ by $\varphi_{k,m,\theta,y}(z)=z^{-k}+ye^{i\theta}z^{-m}$. Then,
    \begin{align*}
        \int_{\mathbb{D}} |(P(h^r\varphi))_{\overline{z}}|^2&=\pi\sum_{j=0}^{k-1}(k-j)\vert c_j\vert^2 r^{2j}+\pi y\sum_{j=0}^{m-1}(m-j)\Re(c_j\overline{c_{j+k-m}})r^{2j+k-m}\\
        &+\pi y^2\sum_{j=0}^{m-1}(m-j)\vert c_j\vert^2r^{2j}
    \end{align*}
 and 
    \begin{align*}
        \Re \int_{\mathbb{D}} (P(h^r\varphi))_{z}(P(h^r\varphi))_{\overline{z}}&=\pi y^2\sum_{j=0}^{m-1}(m-j)\Re(c_jc_{2m-j}e^{2i\theta})r^{2m}+ \pi y\sum_{j=0}^{m-1}(m-j)\Re(c_{j}c_{k+m-j}e^{i\theta})r^{k+m}\\
        &+ \pi y \sum_{j=0}^{k-1}(k-j)\Re(c_{j}c_{k+m-j}e^{i\theta})r^{k+m} + \pi\sum_{j=0}^{k-1}(k-j)\Re(c_jc_{2k-j})r^{2k}.
    \end{align*}
\end{lemma}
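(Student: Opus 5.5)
The plan is a direct Fourier computation on the unit circle, using the explicit Poisson extension of monomials. I read $P(h^r\varphi)$ as the harmonic extension of the boundary function $h^r_z\varphi$, as in Theorem \ref{thm: thmc}, and normalize so that $h^r_z(z)=\sum_j c_j r^j z^j$ (this matches the powers of $r$ in the statement).

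\textbf{Step 1: split the extension.} On $\partial\mathbb{D}$ we have $\overline{z}=z^{-1}$, so
$$h^r_z\varphi=\sum_{j\ge 0}c_jr^j z^{j-k}+ye^{i\theta}\sum_{j\ge0}c_jr^jz^{j-m}.$$
The harmonic extension of $z^p$ is $z^p$ for $p\ge0$ and $\overline{z}^{|p|}$ for $p<0$. Hence $P(h^r_z\varphi)=H+A$, where $H$ is holomorphic and $A$ is antiholomorphic:
$$H=\sum_{j\ge k}c_jr^jz^{j-k}+ye^{i\theta}\sum_{j\ge m}c_jr^jz^{j-m},\qquad A=\sum_{j<k}c_jr^j\overline{z}^{\,k-j}+ye^{i\theta}\sum_{j<m}c_jr^j\overline{z}^{\,m-j}.$$
Consequently $(P(h^r_z\varphi))_{\overline z}=A_{\overline z}$ and $(P(h^r_z\varphi))_z=H_z$. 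Only finitely many terms enter $A$, which is why the sums are truncated at $k-1$ and $m-1$. Convergence of the series in $H$ on $\overline{\mathbb{D}}$ is automatic, since $h$ is entire.

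\textbf{Step 2: the $|\cdot_{\overline z}|^2$ term.} The only integral identity needed is the orthogonality relation
$$\int_{\mathbb{D}}z^a\overline{z}^{\,b}=\frac{\pi}{a+1}\,\delta_{ab}\qquad (a,b\ge 0).$$
Differentiating, $A_{\overline z}=\sum_{j<k}(k-j)c_jr^j\overline{z}^{\,k-j-1}+ye^{i\theta}\sum_{j<m}(m-j)c_jr^j\overline{z}^{\,m-j-1}$.

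1. The diagonal pieces of $|A_{\overline z}|^2$ give $(k-j)^2\cdot\pi/(k-j)=\pi(k-j)$ times $|c_j|^2r^{2j}$, and similarly with $y^2$ for the $m$-sum.
2. The cross terms survive only when $k-j=m-j'$, that is $j=j'+k-m$. Each surviving term contributes $\pi(m-j')\cdot 2\Re\big(\ldots\big)$, a real part of $c_{j'}\overline{c_{j'+k-m}}$ times a phase $e^{\pm i\theta}$ from the $ye^{i\theta}$ coefficient.
3. The powers of $r$ come out as $r^{2j'+k-m}$, matching the statement.

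\textbf{Step 3: the $\Re\int H_zA_{\overline z}$ term.} Write $H_z$ with terms $p\,c_{p+k}r^{p+k}z^{p-1}$ (and the analogous $y$-terms with $m$). Then $\int_{\mathbb{D}}H_zA_{\overline z}$ pairs $z^{p-1}$ with $\overline{z}^{\,q-1}$ only when $p=q$, giving $\pi pq/p=\pi q$. There are four combinations, one for each choice of $(k\text{ or }m)$ in $H$ and $(k\text{ or }m)$ in $A$:

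1. The $(k,k)$ pairing gives $\pi(k-j)c_jc_{2k-j}r^{2k}$.
2. The $(m,m)$ pairing gives $\pi y^2(m-j)e^{2i\theta}c_jc_{2m-j}r^{2m}$.
3. The two mixed pairings give $\pi y(k-j)e^{i\theta}c_jc_{k+m-j}r^{k+m}$ and $\pi y(m-j)e^{i\theta}c_jc_{k+m-j}r^{k+m}$.

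Taking real parts then yields the second formula.

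The main obstacle is purely bookkeeping. The key points are to track the index shift in the mixed terms correctly, to get the range of $j$ right in each pairing (determined by which factor is truncated), and to check that each index matching condition is satisfied exactly once. For the cross term in Step 2, I would also recheck how the phase $e^{i\theta}$ interacts with the conjugation. The $\theta$-dependence there may cancel or may have been absorbed into the statement, so I would verify against the case $k=m$ as a sanity check.
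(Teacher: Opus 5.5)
Your approach is the paper's. You expand $\varphi h^r_z$ on $\partial\mathbb{D}$, take the Poisson extension monomial by monomial, and split the result into an antiholomorphic part (the paper's $A+B$) and a holomorphic part ($C+D$). You then evaluate each pairing using the orthogonality of $z^a\overline{z}^{\,b}$ on $\mathbb{D}$. Your Step 3 reproduces the second formula exactly, including the index ranges. Like the paper, your normalization $h^r_z=\sum c_jr^jz^j$ drops an overall factor $r$, which only rescales everything by $r^2$.

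The hedge at the end of Step 2 should be resolved in your favour: neither the phase nor your factor $2$ goes away. Since $\overline{B}$ carries $e^{-i\theta}$, one gets
\[
2\Re\int_{\mathbb{D}}A\overline{B}=2\pi y\sum_{j=\max(0,m-k)}^{m-1}(m-j)\,\Re\big(e^{i\theta}c_j\overline{c_{j+k-m}}\big)\,r^{2j+k-m}.
\]
So the middle term of the first formula, as printed, is wrong. The paper's proof loses the $e^{-i\theta}$ when it evaluates $\int A\overline{B}$, and then drops the $2$ from $2\Re A\overline{B}$.

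The $k=m$ check you propose settles this. For $\theta=\pi$ and $y=1$ we have $\varphi\equiv 0$, so the integral must vanish. Your expression gives $\pi(1-2+1)\sum_{j<m}(m-j)|c_j|^2r^{2j}=0$, whereas the printed one gives $3\pi\sum_{j<m}(m-j)|c_j|^2r^{2j}$. So you should prove the corrected identity rather than try to match the printed one.

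The slip is harmless downstream. In the proof of Theorem \ref{thm: main_C_case}, only the degree bound $\deg Q_1\le k+m-2$ on this term is used, and that bound holds for the correct coefficient.
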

\begin{proof}
    Writing,
$$\varphi(z) h_{z}^r(z)=\sum^\infty_{j=0}c_jr^jz^{j-k}+ye^{i\theta}\sum^\infty_{j=0}c_jr^jz^{j-m}$$
the harmonic extension of the restriction to $\partial\mathbb{D}$ is
$$P(\varphi h_z^r)(z)= \sum_{j=0}^{k}c_jr^j\overline{z}^{k-n}+\sum_{j=k+1}^{\infty}c_j r^j z^{j-k}+ye^{i\theta}\sum_{j=0}^{m}c_j r^j \overline{z}^{m-j}+ye^{i\theta}\sum_{j=m+1}^{\infty}c_j r^j z^{j-m}.$$
Differentiating, we obtain
     \[
    \begin{aligned}
    P(\varphi h_{z}^r)_{\overline z}(z)
    &=
    \underbrace{\sum_{j=0}^{k-1} c_j r^j (k-j)\,\overline z^{\,k-j-1}}_{A:=}
    \;+\;
    \underbrace{ye^{i\theta}\sum_{j=0}^{m-1} c_j r^j (m-j)\,\overline z^{\,m-j-1}}_{B:=},
    \\[1ex]
    P(\varphi h_{z}^r)_{z}(z)
    &=
    \underbrace{\sum_{j=k+1}^{\infty} c_j r^j(j-k)\,z^{\,j-k-1}}_{C:=}
    \;+\;
    \underbrace{y e^{i\theta}\sum_{j=m+1}^{\infty} c_j r^j (j-m)\,z^{\,j-m-1}}_{D:=}.
    \end{aligned}
    \]
To calculate integrals below, we will use the orthogonality property
\begin{equation}
\label{orthogonality}
    \int_\mathbb{D}z^a\overline{z}^b=\int_0^{2\pi}\int_0^1s^{a+b+1}e^{i\theta(a-b)}dsd\theta=\frac{\pi}{a+b+2}\mathbf{1}_{a=b}
\end{equation}
several times.
We first compute $\int_\mathbb{D}\vert P(\varphi h_{z}^r)_{\overline z}\vert^2$. Writing $\vert P(\varphi h_{z}^r)_{\overline z}\vert^2=A\overline{A}+2\Re A\overline{B}+B\overline{B}$, we go term by term. Using $\eqref{orthogonality}$,
\begin{align*}
    \int_\mathbb{D}A\overline{A}&=\sum_{j_1,j_2=0}^{k-1}c_{j_1}\overline{c_{j_2}}r^{j_1+j_2}(k-j_1)(k-j_2)\int_\mathbb{D}\overline{z}^{k-j_1-1}z^{k-j_2-1}\\
    &=2\pi\sum_{j_1,j_2=0}^{k-1}c_{j_1}\overline{c_{j_2}}r^{j_1+j_2}(k-j_1)(k-j_2)\frac{1}{2k-j_1-j_2} \mathbf{1}_{j_1=j_2}\\
    &=\pi\sum_{j=0}^{k-1}(k-j)\vert c_j\vert^2r^{2j}.
\end{align*}
Analogous computations give
$$\int_\mathbb{D}B\overline{B}=\pi y^2\sum_{j=0}^{m-1}(m-j)\vert c_j\vert^2r^{2j}$$
and 
$$\int_\mathbb{D}A\overline{B}= \pi y\sum_{j=0}^{m-1}(m-j)c_j\overline{c_{j+k-m}}r^{2j+k-m}.$$
We've done all of the computations that give the first expression in the statement of the lemma. We proceed in a similar way for $\Re\int_\mathbb{D}P(\varphi h_{z}^r)_zP(\varphi h_{z^r})_{\overline{z}}$ using $P(\varphi h_{z}^r)_zP(\varphi h_{z}^r)_{\overline{z}}=AC+AD+BC+BD$. As above, we compute directly using $\eqref{orthogonality}$. We show the details for $\Re AD$ and $\Re BC$, and leave it to the reader to check the simpler computations that give
$$\Re\int_\mathbb{D}BD=\pi y^2\sum_{j=0}^{m-1}(m-j)\Re(c_jc_{2m-j}e^{2i\theta})r^{2m}$$
and
$$\Re\int_\mathbb{D}AC=\pi\sum_{j=0}^{k-1}(k-j)\Re(c_jc_{2k-j})r^{2k}.$$
For the real part of the integral of $AD$, we compute
\begin{align*}
        \Re\int_\mathbb{D}AD &=y\Re\sum_{j_1=0}^{k-1}\sum_{j_2=m+1}^{\infty} (k-j_1)(j_2-m)c_{j_1}c_{j_2}r^{j_1+j_2} e^{i\theta}\int_\mathbb{D} \overline{z}^{k-j_1-1} z^{j_2-m-1}\\
        &=2\pi y\sum_{j_1=0}^{k-1}\sum_{j_2=m+1}^{\infty}\frac{(k-j_1)(j_2-m)}{k-m-j_1+j_2} \Re(c_{j_1}c_{j_2}e^{i\theta})r^{j_1+j_2} \mathbf{1}_{j_2=k+m-j_1}\\
        &=\pi y \sum_{j=0}^{k-1}(k-j)\Re(c_{j}c_{k+m-j}e^{i\theta})r^{k+m}.
\end{align*}
Similarly, for the real part of the integral of $BD$,
\begin{align*}
    \Re\int_\mathbb{D}BC &=y\sum_{j_1=0}^{m-1}\sum_{j_2=k+1}^{\infty}(m-j_1)(j_2-k)\Re(c_{j_1}c_{j_2}e^{i\theta})r^{j_1+j_2}\int_\mathbb{D}\overline{z}^{m-j_1-1}z^{j_2-k-1}\\
    &=2\pi y\sum_{j_1=0}^{m-1}\sum_{j_2=k+1}^{\infty}\frac{(m-j_1)(j_2-k)}{m-k+j_2-j_1}\Re(c_{j_1}c_{j_2}e^{i\theta})r^{j_1+j_2}\mathbf{1}_{j_2=k+m-j_1}\\
    &=\pi y\sum_{j=0}^{m-1}(m-j)\Re(c_{j}c_{k+m-j}e^{i\theta})r^{k+m}.
\end{align*}
Putting everything together, the proof is complete.
\end{proof}
\begin{lem}\label{lem: E_k is zero}
    Let $N>0$ be an integer and let
     $a=(a_j)_{j=1}^N\in \C^N$ be a vector such that for all $j$, $a_{j}=a_{N-j}$. For all $0\leq k \leq \left\lfloor \frac{N}{2} \right\rfloor$, set $$E_k=\sum_{j=0}^{k-1}(k-j)a_j+\sum_{j=0}^{N-k-1}(N-k-j)a_j.$$ If $E_k=0$ for every $k$, then $a=0.$
\end{lem}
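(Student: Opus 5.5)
The plan is to turn the vanishing of the $E_k$ into vanishing of second differences, which read off the coefficients $a_k$ one at a time; the symmetry $a_j=a_{N-j}$ makes this work. Throughout I read the indexing as $a=(a_j)_{j=0}^N$, which is what the symmetry $a_j=a_{N-j}$ and the sums starting at $j=0$ require.

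\emph{Setup.} Define
$$S_k=\sum_{j=0}^{k-1}(k-j)a_j,\qquad T_k=\sum_{j=0}^{k}a_j,\qquad A=\sum_{j=0}^{N}a_j,$$
so that $E_k=S_k+S_{N-k}$. A direct telescoping gives $S_{k+1}-S_k=T_k$, and for $k\ge 1$ we have $T_k-T_{k-1}=a_k$.

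\emph{Key identity.} The symmetry $a_j=a_{N-j}$ lets me rewrite
$$T_{N-k-1}=\sum_{j=0}^{N-k-1}a_j=\sum_{j=k+1}^{N}a_j=A-T_k.$$
Hence
$$E_{k+1}-E_k=T_k-T_{N-k-1}=2T_k-A$$
whenever $0\le k$ and $k+1\le\lfloor N/2\rfloor$. Taking one more difference, for $1\le k\le\lfloor N/2\rfloor-1$,
$$(E_{k+1}-E_k)-(E_k-E_{k-1})=2a_k .$$

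\emph{Using $E_0$ to get $A=0$.} First, $E_0=S_0+S_N=S_N=\sum_{j=0}^{N}(N-j)a_j$, where the $j=N$ term is zero anyway. Reindexing by $j\mapsto N-j$ and using the symmetry shows this also equals $\sum_j j\,a_j$. Averaging the two expressions gives $E_0=\tfrac N2 A$. So $E_0=0$ forces $A=0$.

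\emph{Step-by-step vanishing.}
\begin{itemize}
\item If $\lfloor N/2\rfloor\ge 1$, then $E_1-E_0=2a_0-A$ vanishes, so $a_0=0$.
\item The second-difference identity then gives $a_k=0$ for $1\le k\le \lfloor N/2\rfloor-1$.
\item By symmetry, $a_{N-k}=0$ for the same range of $k$.
\end{itemize}

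\emph{The middle coefficient(s).} The only possibly nonzero entries left are the middle ones.
\begin{itemize}
\item If $N$ is even, $0=A=a_{N/2}$.
\item If $N$ is odd, $0=A=2a_{(N-1)/2}$; by symmetry $a_{(N+1)/2}=0$ as well.
\end{itemize}

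\emph{Main obstacle.} The substance is small: it is getting the index ranges right so that each difference $E_{k+1}-E_k$ only uses $k+1\le\lfloor N/2\rfloor$, and handling the degenerate case $N=1$ separately. When $N=1$ only $E_0$ is available, but $A=a_0+a_1=2a_0=0$ already finishes the argument. I expect the bookkeeping of the symmetric reindexing in $T_{N-k-1}=A-T_k$ and $E_0=\tfrac N2A$ to be the only place where care is needed.
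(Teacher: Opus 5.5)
Your proof is correct and is essentially the paper's argument: both take differences of consecutive $E_k$ and use the symmetry $a_j=a_{N-j}$ to isolate single coefficients. The paper uses $E_{k-1}-E_k=\sum_{j=k}^{N-k}a_j=0$ and an induction from the middle outward. Its displayed upper limit $N-k-1$ is a typo, as its own base case shows. You instead use $E_{k+1}-E_k=2T_k-A$, second differences and $E_0=\tfrac N2 A$, sweeping from the outside in; this is only a reordering of the same argument, and your reading of the index range as $j=0,\dots,N$ is the intended one.
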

 Writing $E_k=E_k^1+E_k^2$, where $E_k^1=\sum_{j=0}^{k-1}(k-j)a_j$ and $E_k^2=\sum_{j=0}^{N-k-1}(N-k-j)a_j$, note that for $k=0$, the expression for $E_k^1$ is interpreted as $0$.
\begin{proof}
    Observe that for all $k\geq 1$, $$E_{k-1}^1-E_{k}^1 = -\sum_{j=0}^{k-1}a_j, \hspace{1mm} E_{k-1}^2-E_{k}^2=\sum_{j=0}^{N-k-1} a_j.$$ Indeed, $$E_{k-1}^1-E_{k}^1= \Big (\sum_{j=0}^{k-2} ((k-1-j)-(k-j))a_j\Big ) -a_{k-1}= -\sum_{j=0}^{k-1}a_j,$$ and similar for $E_{k-1}^2-E_{k}^2$. We deduce, using that $E_k=0$ for all $1\leq k\leq \left\lfloor \frac{N}{2} \right\rfloor$, 
    \begin{equation}\label{eq: sum is 0}
        \sum_{j=k}^{N-k-1} a_j=E_{k-1}-E_{k}=0.
    \end{equation}
    Setting $\ell=\left\lfloor \frac{N}{2} \right\rfloor,$ we now use (\ref{eq: sum is 0}) to prove by induction that for $p=0,\dots, \ell-1$, we have $a_{\ell-p}=0.$ For the base case $p=0$, we use \eqref{eq: sum is 0} with $k=\ell$. If $N=2\ell$ is even, the sum above is just $a_\ell$, and hence $a_\ell=0$. If $N=2\ell+1$ is odd, \eqref{eq: sum is 0} becomes $a_{\ell}+a_{\ell+1}=0$, and then we deduce $a_\ell=0$ by symmetry. For the induction step, assume that for all $0\leq q < p$ we have $a_{\ell-q}=0$. Using \eqref{eq: sum is 0} with $k=\ell-p$, \eqref{eq: sum is 0} returns $$0=a_{\ell-p}+\Big(\sum_{j=\ell-p+1}^{N-\ell+p-1} a_j\Big )+ a_{N-\ell+p}= 2a_{\ell-p}+\Big(\sum_{j=\ell-p+1}^{N-\ell+p-1} a_j\Big ).$$ By the induction hypothesis, all terms in the sum on the right vanish, and therefore $a_{\ell-p}=0$.

    The now complete induction, together with symmetry, showed that $a_j=0$ for all $j\neq 0,N$. To see $a_0=a_N=0$, just note that for any $k$, $E_k=0$ expresses $a_0=a_N$ as a linear combination of other $a_j$'s, which we know are all zero. This establishes the result.
\end{proof}

\begin{proof}[Proof of Theorem \ref{thm: main_C_case}, ``only if" direction]
Let $h=(h_1,\dots, h_n):\C\to \R^n$ be a stable branched minimal immersion. As in the set-up for Theorem \ref{thm:main2}, write $h_{i,z}=\sum_{j=0}^\infty c_j^i z^m$, and $c_j=(c_j^1,\dots, c_j^m)$. We show by induction on $N=k+m$ that $g(c_k,c_m)=0$ for any $k,m\geq0$, and then the result will follow from Theorem \ref{thm:main2}. 

By minimality, the sum of $h_{i,z}^2$ over $i=1,\dots,n$ is zero. By evaluating at $z=0$, we deduce that $\sum_{i=1}^n (c^i_0)^2=0$. It follows that $g(c_0,c_0)=0$.

Let $N>0$ and assume that $g(c_k,c_m)=0$ for any $0\leq k+m<N$. We aim to show that $g(c_j,c_{N-j})=0$ for any $j=0,\dots,N$. By Lemma \ref{lem: E_k is zero}, it reduces to showing that for all $0\leq k\leq\left\lfloor \frac{N}{2} \right\rfloor$, $E_k^N=0$, where $$E_k^N:=\sum_{j=0}^{k-1}(k-j)g(c_{j},c_{N-n})+\sum_{j=0}^{N-k-1}(N-k-j)g(c_{j},c_{N-j}).$$ Indeed, knowing that $E_k^N=0$, we can set $a_j = g(c_{j},c_{N-n})$, and then Lemma \ref{lem: E_k is zero} yields the result (here, $E_k^N$ is the ``$E_k$" from Lemma \ref{lem: E_k is zero}).

Fix $k$ between $0$ and $\left\lfloor \frac{N}{2} \right\rfloor$. For $m:=N-k$, we apply Theorem $\ref{thm: thmc}$ to $h^r:\overline{\mathbb{D}}\to \R^n$, $h^r(z)=h(rz)$, for every $r>0$ such that no $h_{i,z}$ has a zero on $\partial\mathbb{D}_r$ (that is, every $r>0$ except a discrete subset), using the destabilization function $\varphi_{k,m,\theta,y}(z):=z^{-k}+ye^{i\theta}z^{-m}$, where we'll choose $\theta\in[0,2\pi]$ and $y\in\mathbb{R}$ later. Lemma \ref{lem: main computation} gives the expressions that go into $\mathcal{F}_{h^r}(\varphi_{k,m,\theta,y})$. Firstly, using Lemma \ref{lem: main computation}, we can write $$\sum_{i=1}^n \int_{\mathbb{D}} |(P(\varphi_{k,m,\theta,y} h_{i,z}^r))_{\overline{z}}|^2 = y^2P_1(r) + yQ_1(r)+T_1(r),$$ where $P_1$, $Q_1$ and $T_1$ are polynomials in $r$ of degrees less or equal to $2m-2$, $k+m-2$ and $2k-2$ respectively. For $\Re\int_\mathbb{D}P(\varphi_j h_{i,z})_zP(\varphi_j h_{i,z})_{\overline{z}}$, if we sum up the real parts of the integrals of the ``AC terms" from the proof of Lemma \ref{lem: main computation} (the final sum in the final expression), we obtain $$\sum_{i=1}^n \pi\sum_{j=0}^{k-1}(k-j)\Re(c_jc_{2k-j})r^{2k}=\pi\sum_{j=0}^{k-1}(k-j)\Re(g(c_j,c_{2k-j}))r^{2k}.$$
Since $2k<N$, every $g(c_j,c_{2k-j})$ is zero. Hence, the expression above is zero, and these terms don't figure into the final expression for $\mathcal{F}_{h^r}(\varphi_{k,m,\theta,y}).$ Next, we sum the ``AD" and ``BC" terms from Lemma \ref{lem: main computation}, and we see $E_k^N$:
$$
\sum_{i=1}^n \pi y \Big (\sum_{j=0}^{m-1}(m-j)\Re (c_j^i c_{k+m-j}^ie^{i\theta}) + \sum_{j=0}^{k-1}\Re (c_j^ic_{k+m-j}^i e^{i\theta})\Big ) r^{k+m}=\pi y\Re(E_k^Ne^{i\theta})r^{k+m}.$$

With the expressions above in hand, assume for the sake of contradiction that $E_k^N\neq 0$. Then, we can choose $\theta$ such that $\Re(E_k^Ne^{i\theta})>0$. Putting everything together,
$$\Re\int_\mathbb{D} P(\varphi_{k,m,\theta,y} h_{i,z}^r)_zP(\varphi_{k,m,\theta,y} h_{i,z}^r)_{\overline{z}}=y^2P_2(r)+yQ_2(r),$$
where $P_2$ is a polynomial of degree less then or equal to $2m$ and $Q_2$ is a polynomial of degree $m+k$. Defining $P:=P_1+P_2$, $Q:=Q_1+Q_2$, and $T:=T_1$, we have
$$\mathcal{F}_{h_2}(\varphi_{k,m,\theta,y})=y^2P(r)+yQ(r)+T(r),$$
where the degrees of $P$ and $T$ are less than or equal to $2m$ and $2k-2$ respectively, and $Q$ has degree $m+k$. Therefore, viewing $\mathcal{F}_{h_2}(\varphi_{k,m,\theta,y})$ as a quadratic polynomial in $y$, we can compute the discriminant
$$\Delta_r=Q(r)^2-4P(r)T(r),$$
which is a polynomial in $r$ of degree $2m+2k$ with a strictly positive leading term. Necessarily, $\Delta_r$ tends to infinity as $r$ grows, and we can choose $r_0>0$ such that $\Delta_{r_0}>0$. Consequently, the polynomial in $y$ has two distinct roots and we can conclude that there exists $y_0\in\mathbb{R}$ such that
$$y_0^2P(r_0)+y_0Q(r_0)+T(r_0)<0.$$
Choosing $y=y_0$ and $\theta$ as above, $\varphi_{k,m,\theta,y_0}$ destabilizes $h_{r_0}$. Since $h$ was stable, we have a contradiction. The conclusion is that $E_k^N=0$ for any $0\leq k\leq\left\lfloor \frac{N}{2} \right\rfloor$. As discussed above, an application of Lemma \ref{lem: E_k is zero} completes the proof.
\end{proof}

\subsection{Proof of Theorem \ref{thm: main}}\label{sec: reducing}
Using Theorem \ref{thm: main_C_case}, we now prove Theorem \ref{thm: main}, along with a slight extension, Theorem \ref{thm: main_extended}. We first prove a lemma. Let $\Omega\subset \C$ be a bounded domain with $C^\infty$ boundary and let $h:\Omega\to \R^n$ a branched minimal immersion with Weierstrass-Enneper data $(\alpha_1,\dots, \alpha_n).$
For any holomorphic function $f:\Omega\to \C$, we consider the new family of holomorphic $1$-forms $(f\alpha_1,\dots, f\alpha_n)$. The condition (\ref{eq:conformal}) holds for $(f\alpha_1,\dots, f\alpha_n)$, and provided each $f\alpha_i$ has purely imaginary periods, this collection determines a branched minimal immersion $h_f:\Omega\to \R^n$, unique up to translation. For convenience, we assume that $h$ and $h_f$ agree at a point.
\begin{lem}\label{lem: properties preserved}
    Let $\Omega$, $h$, $(\alpha_1,\dots, \alpha_n)$, and $f$ be as above, and assume that $h_f$ is well-defined.
    \begin{enumerate}
        \item $h$ is stable if and only if $h_f$ is stable. More generally, the area functionals for $h$ and $h_f$ have the same Morse index.
        \item Let $J$ be an orthogonal complex structure on an even dimensional affine subspace of $\R^n$. $h$ is holomorphic for $J$ if and only if $h_f$ is holomorphic for $J$.
    \end{enumerate}
\end{lem}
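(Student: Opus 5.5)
Both parts rest on one observation: $h_f$ has the same generalized Gauss map as $h$ on the set where $f\neq 0$. Indeed, the complex tangent line spanned by $h_z$ is $[\alpha_1:\dots:\alpha_n]$, and multiplying every $\alpha_i$ by $f$ does not change this point of projective space. The induced metrics are also related by $h_f^*\delta=|f|^2h^*\delta$. So $h_f$ and $h$ are conformal on $\Omega$, and they have the same tangent and normal planes at each point off the discrete zero set of $f$ and the branch points.

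\emph{Part (2).} Suppose $h$ is holomorphic for $J$ on $W+b$. Then $h_z(z)$ lies in the $i$-eigenspace $L$ of $J$ for every $z$, by \eqref{eq: holomorphicity with J} as in the proof of Theorem \ref{thm:main2}. Since $L$ is a complex subspace, $(h_f)_z=f h_z\in L$ as well. Integrating $\Re$ of this shows that $h_f$ lies in $W+b$, because $h_f$ and $h$ agree at a point. The $i$-eigenspace condition then gives holomorphicity of $h_f$. For the converse, we cannot simply divide by $f$ if $f$ has zeros. Instead we argue as follows: $h_z=(h_f)_z/f$ holds off the zeros of $f$, and $h_z(z)\in L$ there. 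Since $L$ is closed and $h_z$ is continuous, $h_z(z)\in L$ everywhere. (We assume $f\not\equiv 0$.)

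\emph{Part (1).} We compare second variations using \eqref{eq: second variation classical}, in its branched form from \cite{Micallefnote}. First, normal variations suffice to detect the index, since tangential ones contribute nothing at a critical point. The normal bundles $Nh$ and $Nh_f$ are the same subbundle of $\Omega\times\R^n$ off a discrete set, and they extend across it (the Gauss map extends holomorphically). The normal connections agree, since each is the projection of the flat derivative onto the same bundle. The quantity $|\nabla^\perp W|_g^2\,dV_g$ is conformally invariant in dimension two, so it is the same for both maps.

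For the curvature term, in conformal coordinates $\sum_{i,j}\langle \textrm{II}(e_i,e_j),W\rangle^2 dV_g$ equals $c\,\lambda^{-2}|\langle (h_{zz})^\perp ,W\rangle|^2dxdy$ with $\lambda^2\sim|h_z|^2$. For $h_f$ we get $(h_{f})_{zz}=f'h_z+fh_{zz}$, whose normal part is $f(h_{zz})^\perp$. The factor $|f|^2$ therefore cancels against $\lambda_f^2=|f|^2\lambda^2$. Hence the quadratic forms $Q_h(W)$ and $Q_{h_f}(W)$ coincide on the same space of compactly supported sections of the common normal bundle. It follows that the Morse indices are equal, and in particular stability is equivalent.

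The main obstacle is making this rigorous at the zeros of $f$ and at branch points, where $h_f$ acquires new branch points. To handle this I would use the log cut-off trick mentioned in the introduction. Sections of the normal bundle supported away from a finite set are dense in the relevant $W^{1,2}$ form domain, since points have zero capacity in dimension two. One shows this by multiplying a test section by $\eta_\epsilon(z)=\min\{1,\max\{0,\log(|z-p|/\epsilon^2)/\log(1/\epsilon)\}\}$, whose Dirichlet energy tends to $0$. The zeroth-order term is bounded, since $|\textrm{II}|^2dV$ is the same finite density for both maps. So $Q(\eta_\epsilon W)\to Q(W)$, and negative directions for one map transfer to the other after cut-off. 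Alternatively, one could invoke the identification in \cite{Micallefnote}.
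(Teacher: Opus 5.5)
Your proposal is correct and takes essentially the same route as the paper. Both arguments rest on the equality of the generalized Gauss maps. For (1), both show that the two integrands in the normal second variation formula coincide on sections of the common normal bundle, after a log cut-off near the branch points of $h$ and $h_f$. For (2), both check the $J$-eigenspace condition for $(h_f)_z=f\,h_z$. Your explicit computation $((h_f)_{zz})^\perp=f\,(h_{zz})^\perp$, with $|f|^2$ cancelling against $\lambda_f^2=|f|^2\lambda^2$, and your continuity argument at the zeros of $f$ simply make precise steps that the paper states only briefly.
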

The intuition for the lemma is that if we conformally reparametrize a minimal surface, both stability and holomorphicity are preserved. Away from branch points of $f$, $h_f$ is locally a conformal reparametrization of $h$.

Toward the proof, recall that every branched minimal immersion $F:\Omega\to \R^n$ comes with a generalized Gauss map $G_F:\Omega\to \mathbb{CP}^{n-1}$, $G_F(z)=[\frac{\partial F_1}{\partial z}(z),\dots, \frac{\partial F_n}{\partial z}(z)]$. The map $G_F$ is holomorphic and determines the normal bundle (a priori defined where $h$ is an immersion). Although we won't really need this below, the map $G_F$ allows one to extend the normal bundle over the branch locus. 
\begin{proof}
     The key point is that the generalized Gauss maps $G_h,G_{h_f}: \Omega\to \mathbb{CP}^{n-1}$ are equal. It follows that the (extended) normal bundles of $h$ and $h_f$, viewed as sitting inside $T\R^n$, are isomorphic via the map $\iota$ defined by $(h_f(p),v)\mapsto (h(p),v).$

     We begin with (1). By the log cut-off trick (see \cite{Micallefnote} and \cite[\S 4.4]{MarkovicSagmanSmillie2025}), we're welcome to restrict to normal variations defined on the complement of sufficiently small disks around the branch points of both $h$ and $h_f$. We can then use the formula (\ref{eq: second variation classical}). By the comments above, normal variations for $h$ are normal for $h_f$ and vice versa. Now we just observe that if we input any normal variation $W$, $$\frac{d^2}{dt^2}|_{t=0}A(h+tW) = \frac{d^2}{dt^2}|_{t=0}A(h_f+tW).$$ Indeed, if $h^*\delta=g$, then $h_f^*\delta = |f|^2 g$ and the normal projections for both maps are equal. We then routinely calculate $dV_{|f|^2g}=|f|^2 dV_g$, $|\nabla^\perp W|_{|f|^2g}^2=|f|^{-2}|\nabla^\perp W|_g^2$, and $\langle \textrm{II}_{h}(e_i,e_j),W\rangle^2= |f|^{-2}\langle \textrm{II}_{h_f}(e_i,e_j),W\rangle^2$, and see that the integrands in (\ref{eq: second variation classical}) are equal. The result follows.

     Item (2) follows immediately from \cite[Proposition 1.6]{Law}, but also it's very easy to see directly. Since $h$ and $h_f$ agree at a point and have the same generalized Gauss map, it follows that one lands in an even dimensional affine subspace if and only if the other does. We write $h_z=h_x+ih_y$ and $f=u+iv$, so that $$(h_f)_z=(uh_x-vh_y)+i(uh_y+vh_x).$$ Then it is easily checked that (\ref{eq: holomorphicity with J}) holds for $h$ if and only if it holds for $h_f$. 
\end{proof}

We now prove Theorem \ref{thm: main}. We'll use Lemma \ref{lem: properties preserved} to cancel off singularities of the Weierstrass-Enneper data, which, modulo a few technical points, reduces Theorem \ref{thm: main} to the case of a map from $\C$ to $\R^n.$
\begin{proof}[Proof of Theorem \ref{thm: main}]
    It is well known that holomorphic implies stable (see also the top of \S \ref{sec: C case}). Let $\Sigma=\C\backslash P$ and $h:\Sigma\to \R^n$ be as in the statement of the theorem, and assume that $h$ is stable. Since $h(\Sigma)$ is locally complete with finite total curvature at all ends except the one corresponding to $\infty$, the Weierstrass-Enneper data $(\alpha_1,\dots,\alpha_n)$ consists of meromorphic functions with poles at $P$ (but there might be an essential singularity at $\infty$). Indeed, a minor extension of \cite[Theorem 1]{CO} shows that $h$ is locally algebraic around the punctures; for a source, the well-known argument is contained in the proof of Theorem 4.1.1 in \cite{AlarconLopezForstneric2021}. (It follows that $P$ is discrete, as was mentioned in the introduction.)

    Now, one of the first consequences of the Weierstrass factorization theorem is that any meromorphic function $g$ on $\C$ can be written $g=g^+/g^-$, where $g^+$ and $g^-$ are holomorphic. For each $\alpha_i$, we write $\alpha_i(z) dz = \frac{\alpha_i^+(z)}{\alpha_i^-(z)}dz$. Set $$f=\prod_{i=1}^n \alpha_i^-$$ and consider the tuple of meromorphic $1$-forms $(f\alpha_1,\dots, f\alpha_n)$. By construction, each $f\alpha_i$ extends to an entire function on $\C$. Hence, with no period problem, we can integrate the real parts to obtain a branched minimal immersion $h_f:\C\to \R^n$ with Weierstrass-Enneper data $(f\alpha_1,\dots, f\alpha_n)$. By item (1) in Lemma \ref{lem: properties preserved}, the restriction of $h_f$ to any open subset of $\C\backslash P$, is stable. By the log cut-off trick (as above, see \cite{Micallefnote}, \cite[\S 4.4]{MarkovicSagmanSmillie2025}), we can perturb any hypothetical destabilizing variation of $h$ that's compactly supported on a disk $\mathbb{D}_R$ to a destabilizing variation compactly supported on $\mathbb{D}_R\backslash (\mathbb{D}_R\cap P).$ It follows from stability on $\C\backslash P$ that $h_f$ is stable on every disk in $\C$, and hence it is globally stable. By Theorem \ref{thm: main_C_case}, $h_f$ is holomorphic, and by (2) in Lemma \ref{lem: properties preserved}, so is $h$. 
\end{proof}
Minor modifications of the proof above can be used to strengthen Theorem \ref{thm: main}, but the results we've found are less clean to state. We include one such result here.
\begin{thm}\label{thm: main_extended}
    For a discrete subset $P\subset \C$, let $h:\C\backslash P\to \R^n$ be a branched minimal immersion with Weierstrass-Enneper data $(\alpha_1,\dots, \alpha_n)$ such that each $\alpha_i$ is of the form
$$\alpha_i = f_i + g_{i,j},$$ where $f_i$ is meromorphic and $g_{i,j}(z)$ is an infinite Laurent expansion $$g_{i,j}(z)=\sum_{k=1}^\infty b_{i,j,k} (z-p_j)^{-k}$$ with only finitely many zeros. Then $h$ is stable if and only if it is holomorphic up to a rigid motion.
\end{thm}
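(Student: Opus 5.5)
Holomorphic maps are stable (\S\ref{sec: C case}), so the content is the converse, and we follow the proof of Theorem \ref{thm: main}. The idea is to find a holomorphic function $f$ on $\C\backslash P$ such that every $f\alpha_i$ extends to an entire function. Then $h_f$ is a branched minimal immersion $\C\to\R^n$, and Lemma \ref{lem: properties preserved} together with Theorem \ref{thm: main_C_case} finishes the argument.

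The hypothesis reads as follows: near each $p_j$, $\alpha_i$ is a meromorphic function plus a principal part $g_{i,j}$ (possibly with infinitely many terms, so $p_j$ may be an essential singularity). Moreover, $g_{i,j}$, viewed as a holomorphic function of $w=(z-p_j)^{-1}$ vanishing at $w=0$, is entire in $w$ with finitely many zeros. By Weierstrass factorization, $g_{i,j}(z)=R_{i,j}(w)e^{\psi_{i,j}(w)}$ with $R_{i,j}$ a polynomial in $w$ and $\psi_{i,j}$ entire.

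\textbf{Construction of $f$.} For each $j$, multiply by $e^{-\psi_{i,j}((z-p_j)^{-1})}$ to kill the essential behaviour of $g_{i,j}$ at $p_j$. This leaves finite-order poles. There are two difficulties:
\begin{enumerate}[label=(\roman*)]
\item The exponential factors for different $i$ must agree, since we need a single $f$.
\item The infinite product over $j$ must converge.
\end{enumerate}

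For (i), use conformality $\sum_i\alpha_i^2=0$. All the $\alpha_i$ define the same generalized Gauss map, so the ratios $\alpha_i/\alpha_1$ are meromorphic functions with the same singularity structure. The plan is to show that the exponential type $\psi_{i,j}$ can be taken independent of $i$ up to polynomial corrections, at least after possibly replacing $\psi_{i,j}$ by a common choice for the index $i$ that dominates. I am not certain this step works, but it is where I would try the finitely-many-zeros hypothesis.

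For (ii), choose Mittag-Leffler/Weierstrass-type convergence factors. Replace $e^{-\psi_j}$ by $e^{-\psi_j+\text{(entire correction)}}$, where the correction is a holomorphic function approximating $\psi_j((z-p_j)^{-1})$ on larger and larger disks avoiding $p_j$, in the style of Runge. The product over $j$ then converges locally uniformly on $\C\backslash P$. Finally, multiply by a Weierstrass product cancelling the remaining finite-order poles of the meromorphic parts $f_i$ and of $R_{i,j}$. The resulting $f$ is holomorphic and nonvanishing off $P$, and each $f\alpha_i$ has a removable singularity at every $p_j$.

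\textbf{Conclusion.} Since $\C$ is simply connected there is no period problem, so $h_f:\C\to\R^n$ exists. Lemma \ref{lem: properties preserved}(1) on compact subdomains of $\C\backslash P$ shows that $h_f$ is stable there. The log cut-off trick, exactly as in the proof of Theorem \ref{thm: main}, then gives stability on all of $\C$. Theorem \ref{thm: main_C_case} makes $h_f$ holomorphic up to a rigid motion, and Lemma \ref{lem: properties preserved}(2) transfers holomorphicity back to $h$.

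The main obstacle is the construction of $f$, specifically reconciling the essential parts across the different $i$ and ensuring convergence. The log cut-off step near essential singularities should also be checked: the points $p_j$ are not finite-total-curvature ends, but since $h_f$ is smooth there and the cut-off is performed for $h_f$ on $\C$, it is the same argument as before.
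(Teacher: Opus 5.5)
Your overall reduction (multiplier $\Rightarrow$ entire Weierstrass--Enneper data $\Rightarrow$ Lemma \ref{lem: properties preserved} and Theorem \ref{thm: main_C_case}, with the log cut-off done for $h_f$ on $\C$) is the same as the paper's, and your convergence device in (ii) is a correct Mittag-Leffler-type argument. The gap is in constructing the multiplier, and it is more than bookkeeping.

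First, the hypothesis is additive, so multiplying by $e^{-\psi_{i,j}((z-p_j)^{-1})}$ does not leave finite-order poles. It produces $e^{-\psi_{i,j}}f_i+R_{i,j}((z-p_j)^{-1})$, and the first term is essentially singular at $p_j$ unless $f_i\equiv 0$ near $p_j$ or $\psi_{i,j}$ is constant.

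Second, your difficulty (i) is a real obstruction. Suppose a single holomorphic $\mu$ on $\C\backslash P$ makes every $\mu\alpha_i$ holomorphic at $p_j$. Then all ratios $\alpha_i/\alpha_{i'}$ are meromorphic at $p_j$, i.e.\ the generalized Gauss map $G_h$ extends holomorphically over $p_j$. In fact such a $\mu$ exists if and only if $G_h$ extends over each $p_j$ and the branch points of $h$ do not accumulate at $P$.

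The stated hypothesis does not give this. Take $n=3$, $P=\{0\}$ and the representation $(f,g)=(1,z^{-1}e^{1/z})$, so that $\alpha_1=\frac12-\frac12 z^{-2}e^{2/z}$, $\alpha_2=\frac i2+\frac i2 z^{-2}e^{2/z}$, $\alpha_3=z^{-1}e^{1/z}$. The principal parts are constant multiples of $z^{-a}e^{b/z}$, which have no zeros on $\C^*$. The residues are $0,0,1$, so all real periods vanish, and $h:\C^*\to\R^3$ is a minimal immersion satisfying every hypothesis. But $g=\alpha_3/(\alpha_1-i\alpha_2)$ has an essential singularity at $0$, so no multiplier can make the data meromorphic there.

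The paper's proof follows your outline. It multiplies by $\prod_{i,j}g_{i,j}^{-1}$, asserts that the result is a tuple of meromorphic forms on $\C$, and then clears poles with an entire function as in the proof of Theorem \ref{thm: main}. It addresses neither your point (i) nor (ii); the product over $j$ need not converge, since the factors $g_{i,j}^{-1}$ do not tend to $1$. The example also shows its key assertion fails: there $\prod_i g_i^{-1}=4iz^5e^{-5/z}$, and $4iz^5e^{-5/z}\alpha_3=4iz^4e^{-4/z}$. So the missing step cannot be filled in from the paper.

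The multiplier reduction needs an extra hypothesis: that $G_h$ extends holomorphically across $P$ and branch points do not accumulate at $P$. This holds, for example, if near each $p_j$ one has $\alpha_i=f_i\,g_j$ with $f_i$ meromorphic and a single essential factor $g_j$, independent of $i$, with finitely many zeros. Under such a hypothesis, your step (ii) followed by a Weierstrass product does produce the multiplier, and the rest of your argument goes through as written.
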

Unfortunately, in view of the Picard theorems, this constraint on the $g_{i,j}$'s is quite strong.
\begin{proof}
    Multiplying $(\alpha_1,\dots, \alpha_n)$ by $\prod_{i,j} g_{i,j}^{-1}$, we obtain a tuple of meromorphic $1$-forms on $\C$. Note that each zero of each $g_{i,j}$ contributed a pole, and that's why we need the finite zeros assumption. One can then multiply by a holomorphic function, say $F$, as in the proof above, to get a tuple of holomorphic $1$-forms on $\C$. From here, the proof is identical to the proof of Theorem \ref{thm: main}, but one uses $F\prod_{i,j} g_{i,j}^{-1}$ in place of what we called $f$.
\end{proof}

\subsection{Covering stability}
Finally, we use our previous results to prove Corollary \ref{thm: FS}.

\begin{proof}[Proof of Corollary \ref{thm: FS}]
Let $\Sigma$, $P$, and $F:\Sigma\backslash P\to \R^n$ be as in the statement of the corollary. Throughout, $\Sigma\backslash P$ is equipped with the complex structure induced by $F$. Due to completeness and finite total curvature, by \cite[Theorem 1]{CO}, the ends corresponding to $P$ are ordinary punctures, and hence the complex structure extends to $\Sigma$. Henceforth, we identify $\Sigma$ with $\C/\Gamma$, for some discrete additive subgroup $\Gamma\subset \C$ (vertical translations if $\Sigma=\C^*$, a lattice if it's a torus), so that the projection map $\pi:\C\to \C/\Gamma=X$ is the universal covering.
    
    As we've previously commented, holomorphic implies covering stable. Assume that $F$ is not holomorphic up to a rigid motion. The map $\pi$ restricts to a (infinite sheeted) covering map $\pi:\C\backslash \hat{P}\to \Sigma$, where $\hat{P}=\pi^{-1}(P_0).$ Since $F$ is locally complete with finite total curvature around $P$, the map $$F\circ \pi: \C\backslash \hat{P}\to \R^n$$ satisfies the hypothesis of Theorem \ref{thm: main}. Thus, by Theorem \ref{thm: main}, $F\circ \pi$ is unstable. That is, there is a compact subset $\Omega\subset \C\backslash \hat{P}$ and a variation $W$ compactly supported on $\Omega$ such that $W$ lowers the area of $F\circ \pi$ to second order. This is not enough to conclude the instability of the original map $F$, since if $\gamma(\Omega)\cap\Omega$ is non-empty for some $\gamma\in \Gamma$, then $W$ will not descend to a variation on $\Sigma\backslash P$. However, we're now prepared to see that $F$ is not covering stable. For every positive integer $n$, consider the action of the subgroup $\Gamma_n:=n\Gamma$ on $\C$, through which the map $\pi: \C\backslash \hat{P}\to \Sigma\backslash P$ descends to a finite covering map $$p_n:\Sigma_n:=(\C\backslash \hat{P})/\Gamma_n\to (\C\backslash \hat{P})/\Gamma=\Sigma\backslash P$$ ($n$-sheeted if $\Sigma=\C^*$, $n^2$-sheeted in the torus case). For $n$ sufficiently large, say, strictly larger than the diameter of $\Omega$ over the minimal norm of a generator of $\Gamma$, $\gamma(\Omega)\cap\Omega=\emptyset$ for all $\gamma\in \Gamma_n$. Hence, $\Omega$ projects to a compact subset of $\Sigma_n$, and $W$ descends to a destabilizing variation on this subset. We deduce that $\Sigma_n$ is unstable, and hence $\Sigma\backslash P$ is not covering stable.
\end{proof}

\section{Destabilization radii}
To prove Theorem \ref{thm: radius} about minimal surfaces in $\R^3$, we give a simplified version of the proof of Theorem \ref{thm: main_C_case} and extract a radius on which we have instability. 

Recall that for a branched minimal immersion to $\R^3$, the Weierstrass-Enneper data $(\alpha_1,\alpha_2,\alpha_3)$ can be repackaged as the data of a holomorphic $1$-form $f$ and a meromorphic function $g$ such that $fg^2$ is holomorphic. The relation is  
$$(\alpha_1,\alpha_2,\alpha_3)=\left(\frac{1}{2}f(1-g^2),\frac{i}{2}f(1+g^2),fg\right).$$ To distinguish, we refer to the pair $(f,g)$ as the Weierstrass-Enneper representation instead of the Weierstrass-Enneper data. When the source is $\C$, we view $f$ as a holomorphic function. Theorem \ref{thm: radius} provides an estimate on the destabilization radius in terms of $f$ and $g$.

Before proving Theorem \ref{thm: radius}, we just need one lemma. Let $\alpha=\sum_{j=0}^\infty c_jz^j$ be a holomorphic function. Define, for $m>0$ and $\gamma\in\mathbb{C}^*$,
$$C(\alpha,\gamma,m):=\sum_{j=0}^{m-1}(m-j)\left(\Re(\gamma^2c_jc_{2m-j})+\vert\gamma c_j\vert^2\right).$$
\begin{lemma}
\label{lemma: destabilization_series}
Let $h=(h_1,\dots, h_n):\overline{\mathbb{D}}\to \R^n$ be a branched minimal immersion such that each $h_i$ has no zeros on $\partial\mathbb{D}.$ For each $i$, let $\alpha_i = \frac{\partial h}{\partial z}.$ Suppose that there exists an integer $m>0$ and $\gamma\in\mathbb{C}^*$ such that
    $$\sum^n_{i=1}C(h_i,\gamma,m)<0.$$
    Then, feeding $\varphi(z)=\gamma z^{-m}$ into Theorem \ref{thm: thmc} destabilizes $h$.
\end{lemma}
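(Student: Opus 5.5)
The plan is to compute $\mathcal{F}_h(\varphi)$ directly for $\varphi(z)=\gamma z^{-m}$, which lies in $\mathcal{V}$ as the prototypical example scaled by $\gamma$. The goal is to show that $\mathcal{F}_h(\varphi)=\pi\sum_{i=1}^n C(\alpha_i,\gamma,m)$, where $\alpha_i=h_{i,z}=\sum_j c^i_j z^j$. Once this identity holds, the hypothesis gives $\mathcal{F}_h(\varphi)<0$, and Theorem \ref{thm: thmc} yields instability. Here $C(h_i,\gamma,m)$ is read as $C(\alpha_i,\gamma,m)$. The boundary hypothesis in Theorem \ref{thm: thmc} is exactly the one assumed in the lemma, so it can be invoked as stated.

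For the computation I would reuse Lemma \ref{lem: main computation}, degenerating it to a single monomial. One option is to take $r=1$, drop the $z^{-k}$ term, set $y=1$, and absorb $\gamma$ into $ye^{i\theta}$. Since $\gamma\in\C^*$ and not of unit modulus, it is cleaner to redo the short computation directly. First expand
$$\gamma z^{-m}\alpha_i(z)=\gamma\sum_{j\ge 0}c^i_j z^{j-m}$$
on $\partial\mathbb{D}$. On the circle, negative powers are replaced by $\overline{z}^{\,m-j}$, so the Poisson extension is
$$P=\gamma\sum_{j=0}^{m}c^i_j\overline{z}^{\,m-j}+\gamma\sum_{j>m}c^i_jz^{j-m}.$$
Then $P_{\overline z}=\gamma\sum_{j=0}^{m-1}(m-j)c^i_j\overline z^{\,m-j-1}$, which is the $B$ term with $ye^{i\theta}$ replaced by $\gamma$. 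Likewise $P_z=\gamma\sum_{j>m}(j-m)c^i_jz^{j-m-1}$, which is the $D$ term.

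Next, use the orthogonality relation \eqref{orthogonality} to evaluate the two integrals:
$$\int_{\mathbb{D}}|P_{\overline z}|^2=\pi\sum_{j=0}^{m-1}(m-j)|\gamma c^i_j|^2,\qquad \Re\int_{\mathbb{D}}P_zP_{\overline z}=\pi\sum_{j=0}^{m-1}(m-j)\Re(\gamma^2c^i_jc^i_{2m-j}).$$
In the second integral, pairing $\overline z^{\,m-j_1-1}$ with $z^{j_2-m-1}$ forces $j_2=2m-j_1$. The resulting weight is $(m-j_1)(m-j_1)\cdot 2\pi/(2(m-j_1))=\pi(m-j_1)$, exactly as in the $BD$ computation. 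Adding the two integrals and summing over $i$ gives $\mathcal{F}_h(\varphi)=\pi\sum_i C(\alpha_i,\gamma,m)$.

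The only delicate point is bookkeeping. One must check that the $\gamma^2$ (not $|\gamma|^2$) appears in the cross term, since $P_zP_{\overline z}$ involves no conjugation. One must also check that the real parts combine correctly with the definition of $\mathcal{F}$ in \eqref{eq: functional}. No genuine obstacle is expected beyond this.
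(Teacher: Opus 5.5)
Your proposal is correct and follows the paper's approach. The paper also obtains $\mathcal{F}_h(\gamma z^{-m})$ from the $B\overline{B}$ and $BD$ terms of Lemma \ref{lem: main computation} and then applies Theorem \ref{thm: thmc}; your direct recomputation, using the correct constant $2\pi/(a+b+2)$ in the orthogonality relation, gives $\mathcal{F}_h(\varphi)=\pi\sum_i C(\alpha_i,\gamma,m)$ and just fills in the details the paper omits (note also that $\gamma$ can be absorbed into that lemma by taking $y=|\gamma|$, $e^{i\theta}=\gamma/|\gamma|$ and keeping the $y^2$-coefficient).
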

\begin{proof}
    One can read off from Lemma \ref{lem: main computation} that $\mathcal{F}_h(\varphi)$ reduces to $\sum^n_{i=1}C(h_i,\gamma,m)$, and then apply Theorem \ref{thm: thmc}; we omit the details. One can also just follow the proof of Theorem D from \cite{MS}, which is Lemma \ref{lemma: destabilization_series} in the case where every $h_i$ is a polynomial.
\end{proof}

\begin{proof}[Proof of Theorem \ref{thm: radius}]
    Let $h:\overline{\mathbb{D}}_r\to \R^3$ be a branched minimal immersion as in the statement of the theorem, with Weierstrass-Enneper representation $(f,g).$ Write $f=\sum_{j=k}^{\infty}b_jz^j$ and $g=a_0+\sum_{j=m}^{\infty}a_jz^j$, with $m>0,$ $a_m,b_k\neq 0.$ To apply Lemma \ref{lemma: destabilization_series}, we reparametrize $h|_{\overline{\mathbb{D}}_r}$ to $\overline{\mathbb{D}}$ via $h^r:=h(r\cdot).$ Write $h^r=(h_1^r,\dots, h_n^r).$ 

 We apply Lemma \ref{lemma: destabilization_series} with the integer $k+m>0$ and  $\gamma\in\mathbb{C}^*$ to be chosen later. Denote by $c^i_j$ the $j^{th}$ Taylor coefficient of the functions $h_{i,z}$. We view this expression from Lemma \ref{lemma: destabilization_series} as a polynomial in $r$:
\begin{align*}
    \sum_{i=1}^{3}&C(h_i^r,\gamma,k+m)=\sum_{i=1}^{3}\sum_{j=k}^{k+m-1}(k+m-j)\left(\operatorname{Re}(\gamma^2c^i_jc^i_{2k+2m-j})r^{2k+2m}+\vert\gamma c^i_j\vert^2r^{2j}\right)\\
    &= 
m \underbrace{\sum_{i=1}^3 \operatorname{Re}\left(\gamma^2 c^i_k c^i_{k+2m}\right)}_{=:A_{k,m}} r^{2k+2m}
+ \sum_{j=1}^{m-1} (m-j) \underbrace{\sum_{i=1}^3 \operatorname{Re}\left(\gamma^2 c^i_{k+j} c^i_{k+2m-j}\right)}_{=:B_{k,m}^j}r^{2k+2m} \\
&+ \sum_{j=0}^{m-1} (m-j) \underbrace{\sum_{i=1}^{3} \lvert \gamma c^i_{k+j} \rvert^2}_{=:C_{k}^j}r^{2k+2j} .
\end{align*}
We now work toward finding expressions for $A_{k,m}$, $B_{k,m}^j$, $1\leq j \leq m-1$, and $C_k^j$, $0\leq j \leq m-1$, in terms of the Taylor coefficients of $f$ and $g$. We first write out
$$h_1^r(z)=\frac{1}{2}\sum_{j=k}^{\infty}(b_j-D_j)r^{j}z^j, \qquad h_2^r(z)=\frac{i}{2}\sum_{j=k}^{\infty}(b_j+D_j)r^{j}z^j, \qquad h^r_3(z)=\sum^\infty_{j=k}F_jr^{j}z^{j},$$
where $D_j:=\sum_{i_1+i_2+i_3=j} b_{i_1}a_{i_2}a_{i_3}$ and   $F_j:=\sum_{i_1+i_2=j}a_{i_1}b_{i_2}$. Then, a direct computation shows that for any $0\leq j\leq m-1$,
\begin{align*}
    \sum_{i=1}^3 c^i_{k+j}c^i_{k+2m-j}&=\frac{1}{2}(b_{k+j}-D_{k+j})(b_{k+2m-j}-D_{k+2m-j})\\
    &-\frac{1}{2}(b_{k+j}+D_{k+j})(b_{k+2m-j}+D_{k+2m-j})+F_{k+j}F_{k+2m-j}\\
    &=-\frac{1}{2}(b_kD_{k+2m}+b_{k+2m}D_k)+F_kF_{k+2m}.
\end{align*}
Using that $a_j=0$ for $1\leq j\leq m-1$ and $b_j=0$ for $j\leq k-1$, we have the following equalities: for $1\leq j\leq m-1$,
$$D_{k+2m-j}=2a_0F_{k+2m-j}-b_{k+2m-j}a_0,$$
and for $j=0$
$$D_{k+2m}=b_ka_m^2+2a_0F_{k+2m}-b_{k+2m}a_0.$$
As well, we have $D_{k+j}=b_{k+j}a_0^2$ and $F_{k+j}=b_{k+j}a_0$ for any $0\leq j\leq m-1$. 

Inserting the expressions above into the formula for $\sum_{i=1}^3 c^i_{k+j}c^i_{k+2m-j}$, we obtain expressions that are used to understand $A_{k,m}$, $B_{k,m}^j$, and $C_k^j$. For $j=0$, the terms in $F_{k+2m}$ cancel out and we obtain $-\frac{1}{2}b_k^2a_0^2$. Therefore, $A_{k,m}=-\frac{1}{2}\Re(\gamma^2 b_k^2a_0^2)$.
For $j\geq 1$, all terms of the sum cancel out and, from we conclude that $B^j_{k,m}=0$ for all $j$. Finally, for $C_{k}^j$, for all $j$, we have
\begin{align*}
    C^j_{k}&=\frac{1}{4}\lvert b_{k+j}-D_{k+j}\rvert^2 + \frac{1}{4}\lvert b_{k+j}+D_{k+j}\rvert^2 + \lvert F_{k+j}\rvert^2\\
    &=\frac{1}{4}\lvert\gamma b_j\rvert^2(\vert1-a_0^2\vert^2+\vert1+a_0^2\vert^2+4\vert a_0\vert^2)
    =\frac{1}{2}\lvert\gamma b_j\rvert^2(1+\lvert a_0\rvert^2)^2.
\end{align*}
Returning to the expression from Lemma \ref{lemma: destabilization_series}, at this stage we can write $$\sum_{i=1}^3 C(h_i^r,\gamma,k+m)= -\frac{1}{2}\Re(\gamma^2 b_k^2a_0^2)r^{2m}+\frac{1}{2}\sum_{j=0}^{m-1}(m-j)\vert b_{k+j}\vert^2(1+\vert a_0\vert^2)^2r^{2j}.$$ 
Now, we are yet to choose $\gamma$. We choose it so that $\Re(\gamma^2 b_k^2a_0^2)=\vert \gamma b_k a_0\vert^2$. Then, the equation above becomes 
$$\sum_{i=1}^3 C(h_i^r,\gamma,k+m)= -\frac{1}{2}\vert \gamma b_k a_0\vert^2r^{2m}+\frac{1}{2}\sum_{j=0}^{m-1}(m-j)\vert b_{k+j}\vert^2(1+\vert a_0\vert^2)^2r^{2j}.$$ This is $\frac{1}{2}P_m(r)$, and the theorem follows from Lemma \ref{lemma: destabilization_series}, provided no $h_i$ has a zero on $\partial\mathbb{D}_r$. If some $h_i$ has a zero on $\partial\mathbb{D}_r$, then we can shrink $r$ ever so slightly, so that it is still larger than the smallest positive root of $P_m$, and then apply Lemma \ref{lemma: destabilization_series}.
\end{proof}

\bibliographystyle{plain}
\bibliography{bibliography}

\end{document}